	\newcommand{\tabcaption}{\def\@captype{table}\caption}
\newtheorem{theorem}{Theorem}[section]
\newtheorem{lemma}[theorem]{Lemma}
\newtheorem{example}[theorem]{Example}
\newtheorem{remark}[theorem]{Remark}
\newcommand{\dxi}{\,{\rm d}\xi}
\newcommand{\deta}{\,{\rm d}\eta}
\newcommand{\bs}{\boldsymbol}
\begin{document}

\title
{Superconvergence   and recovery type  a posteriori error estimation for hybrid stress finite element method\thanks{This work was supported by National Natural Science Foundation of China (11171239) and  Major Research Plan of  National Natural Science Foundation of China (91430105).}}

\author{Yanhong Bai\thanks{School of Mathematics, Sichuan University, Chengdu 610064, China. Email: baiyanhong1982@126.com}, \quad Yongke Wu\thanks{School of Mathematical Sciences, University of Electronic Science and Technology of China, Chengdu 611731, China. Email: wuyongke1982@uestc.edu.cn}, \quad Xiaoping Xie\thanks{Corresponding author. School of Mathematics, Sichuan University, Chengdu 610064, China. Email: xpxie@scu.edu.cn}}

\date{}
\maketitle
\begin{abstract}
Superconvergence   and a posteriori error estimators of recovery type   are analyzed  for the 4-node  hybrid stress quadrilateral finite element method proposed by Pian and Sumihara (Int. J. Numer. Meth. Engrg., 1984, 20: 1685-1695)
 for linear elasticity problems.   Uniform superconvergence of order $O(h^{1+\min\{\alpha,1\}})$ with respect to  the  Lam\'{e} constant $\lambda$ is established for both  the recovered gradients of the displacement  vector and the stress tensor under a mesh assumption,  where    $\alpha>0$ is a parameter characterizing the  distortion of meshes from parallelograms to quadrilaterals. A posteriori error
estimators based on the recovered quantities are shown to be asymptotically exact. Numerical experiments confirm the theoretical results.

\noindent\textbf{Keywords:}  linear elasticity, hybrid stress finite element, superconvergence,   recovery, a posteriori error estimator
\end{abstract}
\maketitle

\section{Introduction}\label{sec:Intro}

Assumed stress hybrid finite element method (also called hybrid stress   method)  pioneered by Pian \cite{Pian1964}    is known to be an efficient  approach
in the analysis of   elasticity problems   (cf. \cite{Pian1978,Pian1995,P-S,Pian-Tong1969,Pian-Tong1986,P-W,  Xie-Zhou1,Zhou-Nie,Zhou-Xie}). 
One main advantage of the hybrid method lies in that, the method allows for  piecewise-independent  approximation  to the  stress   solution and,  through local elimination of the stress unknowns,   finally leads to a  symmetric and positive definite discrete system of unknowns of  displacements.
 In \cite{P-S} Pian and Sumihara derived a robust  4-node hybrid stress quadrilateral element (abbr. PS) through a rational choice of stress terms, where the continuous piecewise isoparametric bilinear  interpolation is used for the displacement approximation.  We refer to \cite{Yu-Xie-Carsten} for the analysis of uniform convergence and a posteriori error estimation for the hybrid stress quadrilateral elements proposed in \cite{P-S, Xie-Zhou1}.

As an active research topic, superconvergence   of finite element solutions to partial differential equations  has been studied intensively for conforming, nonconforming and mixed finite element methods (see, e.g.,  books \cite{Basbuska-Strouboulis,Chen2, Chen-Huang, Li-Huang-Yan2012, Lin-Yan, Wahlbin, Yan, Zhu-Lin} and papers \cite{Bank-Xu2003, Bank-Xu2003a,Chen.L2006, Ewing-Liu-Wang, Heimsund-Tai-Wang2002,Huang-Xu2008,Lakhany;2000, Li-Zhang1999, Schatz;1996,Shi-Jiang-Xue1997,Wang-Ye2001,Ye.X2002, ZM-Z3,Zhang-Naga, Zlamal1977}).  Based on theory of  superconvergence, a posteriori error estimation of recovery type has attracted more and more research interests in recent two decades. The most representative
recovery type error estimator is the Zienkiewicz-Zhu (ZZ) estimator  based on gradient patch recovery  by local discrete
least-squares fitting \cite{Z-Zhu1, Z-Zhu2}. The method is   widely used in engineering practice for its robustness.   Superconvergence properties of the ZZ patch recovery were shown
in \cite{Zhang2000,Li-Zhang1999} for  rectangular  and strongly regular triangular meshes, respectively.  The work of  \cite{Bank-Xu2003,Bank-Xu2003a}  introduced a recovery type error estimator
based on global $L^2$-projection  with smoothing iteration of the multigrid
method, and  established asymptotic exactness in the $H^1$-norm
for linear element    under shape regular triangulation. 
By using the
result in  \cite{Bank-Xu2003},  a new theoretical justification was given in \cite{Huang-Xu2008} for the ZZ  estimator.
A polynomial preserving gradient recovery  (PPR)  method was  proposed in \cite{ZM-Z3,Zhang-Naga}     which is different from the ZZ gradient patch recovery  method \cite{Z-Zhu1}. In \cite{Shi-Xu-Zhang} some patch recovery methods were proposed and analyzed for finite element approximation of elasticity problems using quadrilateral meshes.

So far,   to the authors' knowledge,  there is no  superconvergence  analysis for  the hybrid stress  finite element method  for the elasticity problems.  This paper is to establish superconvergence  for the Pian and Sumihara's hybrid stress quadrilateral element   \cite{P-S}.    We shall derive the uniform superconvergence with respect to  the  Lam\'{e} constant $\lambda$ for both  the recovered  displacement gradients and the recovered stress tensor, and   show that  the a posteriori error
estimators  based on the recovered quantities  are asymptotically exact.

The rest of the paper is organized as follows. Section 2    introduces  the model problem and its weak form. Section 3 shows  the hybrid stress finite element  discretization and some preliminary results. 
Section 4 analyzes the  superconvergence of the hybrid stress method.  Section 5  is devoted to the recovery of the displacement gradients and the stress tensor, as well as the a posteriori estimation of recovered type.
	  Finally, Section 6 provides  numerical results.

\section{Model problem}


 Let $\Omega\subset \mathbb{R}^2$ be a bounded polygonal domain with boundary $\partial \Omega$.
We consider the following linear elasticity problem with homogeneous displacement boundary condition:
\begin{equation}\label{eq:model-prob}
\left\{\begin{array}{rlll}
-\text{div}\mathbf\sigma  & = & \mathbf{f} &  \text{in}\ \Omega,\\
\mathbf\sigma  & = &   \mathbb{C}\mathbf\epsilon(\mathbf{u}) & \text{in}\ \Omega,\\
\mathbf u& = & \mathbf 0  &  \text{on} \  {\Gamma:=\partial \Omega},\\
\end{array}
\right.
\end{equation}
where   $\Omega\subset \mathbb{R}^2$ is a bounded polygonal domain, $\mathbf\sigma\in \mathbb{R}_{sym}^{2\times 2}$ denotes the symmetric stress tensor field, $\mathbf u\in \mathbb{R}^2$ the displacement flied,
$\epsilon(\mathbf u)=\frac{1}{2}\left( \nabla \mathbf u + (\nabla\mathbf u)^T \right)$ the strain tensor, $\mathbf f\in \mathbb{R}^2$ the body loading density, 
and $\mathbb{C}$ the elasticity module tensor with
\begin{equation*}
\mathbb{C}\mathbf\epsilon(\mathbf u)=2\mu\mathbf\epsilon(\mathbf u)+\lambda \text{div}\mathbf u \mathcal{I}.
\end{equation*}
Here $\mathcal{I}$ is the $2\times 2$ identity tensor, $\text{tr}(\bs\sigma)$ the trace of the stress tensor $\mathbf\sigma$, and $\mu,\ \lambda$ the Lam\'{e} parameters.

We introduce some notations as follows. For an arbitrary open set $T$,  we  denote by $H^k(T)$ the  usual Sobolev space consisting
of functions defined on $T$ with derivatives of order up to $k$ being square-integrable, with norm $\|\cdot\|_{k,T}$ and semi-norm $|\cdot|_{k,T}$.  In particular,  $H^0(T)=L^2(T)$. When $T=\Omega$, we   abbreviate $\|\cdot\|_{k,\Omega}$ and $|\cdot|_{k,\Omega}$ to $\|\cdot\|_{k}$ and $|\cdot|_{k}$, respectively, and denote $\|\cdot\|:= \|\cdot\|_{0}$. We use the same notations of  norms and semi-norms as above  for corresponding vector or tensor spaces.   For any vector $\mathbf\alpha = (\alpha_i)_{i=1}^n\in\mathbb R^n$, we denote $\|\mathbf\alpha\|_{l^2}:=\left(\sum\limits_{i = 1}^n\alpha_i^2\right)^{\frac{1}{2}}$ and   $\|\mathbf\alpha\|_{l^{\infty}}:=\max\limits_{1\leq i\leq n}|\alpha_i|$.

Throughout the paper, we use notation $a\lesssim b$ (or $a\gtrsim b$) to
represent that there exists a constant $C$, independent of mesh size $h$
and the Lam\'e constant $\lambda$, such that $a\leq Cb$ (or $a\geq
Cb$), and use $a\approx b$ to denote $a\lesssim b\lesssim a$.

Define the spaces
$$
\Sigma:=\left\{\mathbf\tau\in L^2(\Omega;\mathbb{R}_{sym}^{2\times 2}),\ \int_{\Omega}\text{tr}(\mathbf\tau)\ =0 \right\},
$$
 $$\mathbf V:=(H^1_0(\Omega))^2=\{\mathbf v\in
(H^1(\Omega))^2:\ \mathbf v|_{\Gamma}=0\},$$
where $L^2(\Omega;\mathbb{R}_{sym}^{2\times 2})$  denotes the space
of square-integrable symmetric tensors, and $\text{tr}(\mathbf\tau):=\tau_{11}+\tau_{22}$   the trace of tensor $\tau$.
Then we have the following weak problem for  the system \eqref{eq:model-prob}: Find $(\mathbf \sigma,\mathbf u)\in
\Sigma\times \mathbf V$ such
that
\begin{align}
\label{eq:model-weak}\left\{
\begin{array}{llllll} \displaystyle a(\mathbf\sigma,\mathbf\tau)& + & b(\mathbf\tau,\mathbf u)& =& 0
&\text{for all }\ \mathbf\tau\in \mathbf\Sigma,\\
\displaystyle & &  b(\mathbf\sigma,\mathbf v)& = & F(\mathbf v) &
\text{for all }\ \mathbf v\in \mathbf V,\end{array}\right.
\end{align}
where
\begin{align*}
a(\mathbf\sigma,\mathbf\tau)&=\int_{\Omega}\mathbb{C}^{-1}\mathbf\sigma:\mathbf\tau= \frac{1}{2\mu}\int_{\Omega}\left(\mathbf\sigma:\mathbf\tau-\frac{\lambda}{2(\mu+\lambda)}\text{tr}(\mathbf\sigma)\text{tr}(\mathbf\tau)\right),\\
b(\mathbf\tau,\mathbf v)&=-\int_{\Omega}\mathbf\tau:\mathbf\epsilon(\mathbf
v),\quad
F(\mathbf v)=-\int_{\Omega}\mathbf f\cdot\mathbf
v.
\end{align*}

It is well-known that   the weak problem \eqref{eq:model-weak} admits a unique solution.
%
%

 \section{Hybrid stress finite element discretization}\label{sec:prelimilary}
\subsection{Geometric properties of quadrilateral  meshes}

Let $\{\mathcal
T_h\}_{h>0}$ be a partition of $\bar\Omega$ by convex quadrilaterals with the mesh size $h:=\max\limits_{K\in\mathcal T_h}h_K$, where
$h_K$ is   the diameter of quadrilateral  $K\in \mathcal T_h$.  Let $Z_i(x_i^K,y_i^K)$ and $\hat Z_i(\xi_i,\eta_i)$  for $ 1\leq i\leq 4$ be the  vertices of $K$ and  the reference
element  $\hat{K}=[-1,1]^2$  (cf. Figure \ref{Fig:bilinear}), respectively.
 There exits a unique invertible bilinear mapping
$F_K:\ \hat{K}\rightarrow K$ that maps $\hat K$ onto $K$ with $F_K(\hat Z_i)=Z_i$. The mapping $F_K$ is of the form
\begin{align}\label{eq:trans}
\left(\begin{array}{c} x \\  y\end{array}\right)= F_K(\xi,\eta)
=\left(\begin{array}{c}
a_0^K+a_1^K\xi+a_2^K\eta+a_{12}^K\xi\eta \\
b_0^K+b_1^K\xi+b_2^K\eta+b_{12}^K\xi\eta
\end{array}\right),
\end{align}
where  $\xi,\eta\in [-1,1]$ are the local coordinates and
$$
\left(\begin{array}{cc} a_0^K & b_0^K\\ a_1^K & b_1^K\\ a_2^K & b_2^K \\ a_{12}^K & b_{12}^K\end{array}\right)=
\frac{1}{4}\left(\begin{array}{rrrr}
 1 & 1 & 1 & 1\\
 -1& 1 & 1 &-1\\
 -1&-1 & 1 & 1\\
 1&-1 &1 &-1\\
\end{array}\right)
\left(\begin{array}{cc} x_1^K & y_1^K\\ x_2^K & y_2^K \\ x_3^K & y_3^K \\ x_4^K & y_4^K  \end{array}\right).
$$
In the following we may omit the superscript $K$ of the above notations  if there is no confusing.

\begin{figure}[h]
\setlength{\unitlength}{0.8cm}
\begin{picture}(14,6)
\put(1,1){\framebox(3.5,3.5)}\put(0,2.75){\vector(1,0){5.5}}\put(5.1,3){\footnotesize
$\xi$} \put(2.75,0){\vector(0,1){5.5}}\put(2.5,5.1){\footnotesize
$\eta$}
\put(2.5,4.55){1}\put(2.3,0.5){-1}\put(4.6,2.78){1}\put(0.5,2.78){-1}
\put(0.5,0.5){\footnotesize $\hat{Z}_1$}\put(4.5,0.5){\footnotesize
$\hat{Z}_2$} \put(4.5,4.5){\footnotesize
$\hat{Z}_3$}\put(0.5,4.5){\footnotesize $\hat{Z}_4$}
\put(6,2.75){\vector(1,0){1}}\put(6.3,3){\footnotesize
$F_K$}
\put(8,2){\line(1,2){1.5}}
\put(9.5,5){\line(4,1){2}}\put(11.5,5.5){\line(1,-4){1.06}}\put(8,2){\line(6,-1){4.5}}

\put(8,2){\line(1,1){3.5}} \put(9.5,5){\line(4,-5){3}}

\put(10.9,3.025){$\bullet$} \put(9.65,3.65){$\bullet$}

\put(9.75,3.75){\line(2,-1){1.3}}

\put(11.1,3.125){$O_2$}  \put(9.15,3.75){$O_1$} \put(10.1,3.1){$d_K$}

\put(7.2,1){\vector(1,0){6.2}}\put(7.5,0.7){\vector(0,1){5}}

\put(13.3,1.2){x}\put(7,5){y}
\put(7.7,1.6){\footnotesize $Z_1$}\put(12.7,1.3){\footnotesize
$Z_2$}\put(11.7,5){\footnotesize $Z_3$}\put(9.0,5){\footnotesize
$Z_4$}\put(10,2.5){\footnotesize $K$} \put(3,3){\footnotesize
$\hat{K}$}
\end{picture}
\caption{Bilinear transformation $F_K$ maps   reference element
$\hat K$ (in the left) to   element $K$ (in the
right).}\label{Fig:bilinear}
\end{figure}
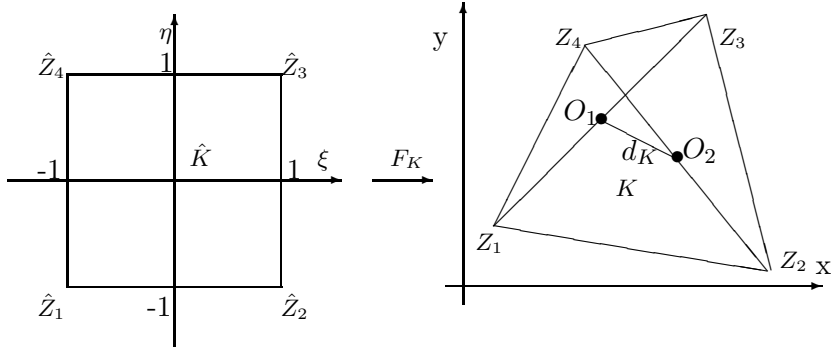

%
%
%
%


The Jacobi matrix and   Jacobian of $F_K$ are respectively given by
\begin{equation}\label{eq:Jacobi}
DF_K(\xi,\eta) = \left(\begin{array}{cc}
\frac{\partial x}{\partial\xi} & \frac{\partial x}{\partial\eta} \\
\frac{\partial y}{\partial\xi} & \frac{\partial y}{\partial\eta}
\end{array}\right) = \left(\begin{array}{cc}
a_1+a_{12}\eta & a_2+a_{12}\xi \\
b_1+b_{12}\eta & b_2+b_{12}\xi
\end{array}\right),
\end{equation}
\begin{equation}\label{J_K}
J_K(\xi,\eta) = \det (DF_K) = J_0 + J_1\xi + J_2\eta,
\end{equation}
where
$$
J_0 = a_1b_2 - a_2b_1,\quad J_1 = a_1b_{12} - a_{12}b_1,\quad J_2 = a_{12}b_2 - a_2b_{12}.
$$
It is easy to obtain the inverse of the Jacobi $DF_K$ with
\begin{equation}\label{eq:inv_Jacobi}
DF_K^{-1} \circ F_K(\xi,\eta) = \left(\begin{array}{cc} \frac{\partial\xi}{\partial x} & \frac{\partial\xi}{\partial y}\\
\frac{\partial\eta}{\partial x} & \frac{\partial\eta}{\partial y}  \end{array}\right)
=\frac{1}{J_K}\left(\begin{array}{cc} b_2 + b_{12}\xi & -a_2 -a_{12}\xi \\ -b_1 - b_{12}\eta & a_1 + a_{12}\eta\end{array}\right).
\end{equation}

%
%
%
%
%
%
%
Throughout this paper we assume the partition $\mathcal T_{h}$ is  shape regular  in the following sense \cite{ZM-Z1}:    There exist a constant $\varrho>2$, independent
of $h$, such that for all $K\in \mathcal{T}_{h}$ it holds
\begin{equation}\label{partition condition}
 h_{K}\leq \varrho \rho_{K}.
\end{equation}
Here
 $ \rho_{K}:=\min\limits_{1\leq i\leq4}\  \rho_i$, with $\rho_i$ being   the diameter of the largest circle  inscribed
 in $T_{i}$, the sub-triangle
 of $K$ with vertices $Z_{i-1}$, $Z_{i}$ and $Z_{i+1}$
 (the index on $Z_{i}$ is modulo 4) for $i=1,\cdots,4$.

We   introduce several additional mesh conditions which will be used in the forthcoming analysis of superconvergence (Section \ref{super}).

\begin{itemize}

\item \textbf{(MC1) Diagonal condition}: There exists a constant $\alpha>0$ such that for any quadrilateral  $K\in \mathcal{T}_{h}$, the distance, $d_K$ ($d_K=|O_{1}O_{2}|=\frac{1}{2}\sqrt{a_{12}^2+b_{12}^2}$), between the
midpoints of the diagonals of $K$   (See Figure \ref{Fig:bilinear}) satisfies
\begin{equation}\label{d_K}
 d_K= O(  h_{K}^{1+\alpha}).
 \end{equation}

\item \textbf{(MC2) Neighboring condition}:  For any two quadrilaterals $K_{1},\ K_{2}\in\mathcal{T}_{h}$
sharing a common edge, it holds, for $j=1,2$,
\begin{equation}
a_{j}^{K_{1}}=a_{j}^{K_{2}}(1+O(h_{K_{1}}^{\alpha}+h_{K_{2}}^{\alpha})),\quad b_{j}^{K_{1}}=b_{j}^{K_{2}}(1+O(h_{K_{1}}^{\alpha}+h_{K_{2}}^{\alpha})).
\end{equation}


\end{itemize}

\begin{remark} \textbf{Diagonal condition} (MC1) is also called $(1+\alpha)$-section condition (cf. \cite{Ming-Shi}).
Note that $K$ is a parallelogram if and only if $d_{K}=0$, which means $\alpha=+\infty$.  When $\alpha=1$,  (MC1) is  the Bi-Section Condition or  condition B  \cite{Shi}.
\end{remark}

\begin{remark} $\mathcal{T}_{h}$ is said to satisfy Jamet condition \cite{Jamet1977} if there exists a constant $r > 0$ such that
$h_K\leq r \tilde\rho_K $
holds for any quadrilateral  $K\in \mathcal{T}_{h}$, where $\tilde\rho_K $ is   the diameter of the largest circle
inscribed in $K$. As shown in \cite{Ming-Shi},   if both Jamet condition and \textbf{Diagonal condition} (MC1)  hold, then
  $ \mathcal{T}_{h}$ is shape regular for sufficiently smal $h$.
\end{remark}

In view of the shape regularity condition \eqref{partition condition}, it is easy to obtain the following estimates for  the Jacobian  $J_K$ given in \eqref{J_K}.
\begin{lemma}\label{JK-J1-J2}
For any $K\in\mathcal{ T}_{h}$ it holds
\begin{equation}\label{lem:J0_J1_J2-1}
J_K\approx J_0\approx h_{K}^2.
\end{equation}
Further more, if \textbf{Diagonal condition} (MC1) is satisfied, then it holds
\begin{equation}\label{lem:J0_J1_J2-2}
 \max\{|J_1|, |J_2|\}\approx h_{K}^{2+\alpha}.
\end{equation}

\end{lemma}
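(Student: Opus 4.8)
The plan is to work on the reference element $\hat K$ and then exploit the affine part of $F_K$. First I would estimate $J_0 = a_1 b_2 - a_2 b_1$. Recall from \eqref{eq:trans} that $(a_1,b_1)$ and $(a_2,b_2)$ are, up to a factor $1/4$, combinations of the vertex coordinates; geometrically $2(a_1,b_1)$ and $2(a_2,b_2)$ are the averaged edge vectors of $K$, so $4|J_0| = |4a_1 \cdot 4b_2 - \ldots|$ is (four times) the area of the parallelogram spanned by these averaged edge vectors. Under the shape regularity assumption \eqref{partition condition}, each sub-triangle $T_i$ has inradius $\rho_i \gtrsim h_K$, hence area $\gtrsim h_K^2$, and also every edge of $K$ has length $\lesssim h_K$; combining these gives $|a_1|,|a_2|,|b_1|,|b_2| \lesssim h_K$ and $|J_0| \gtrsim h_K^2$, so $J_0 \approx h_K^2$. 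For the full Jacobian, note $|J_1\xi + J_2\eta| \le |J_1| + |J_2|$, and $|J_1| = |a_1 b_{12} - a_{12} b_1| \lesssim h_K \cdot d_K$ since $|a_{12}|,|b_{12}| = 2 d_K/\sqrt{\,\cdot\,} \lesssim d_K$ by the formula $d_K = \frac12\sqrt{a_{12}^2 + b_{12}^2}$, and without any mesh condition $d_K \lesssim h_K$, so $|J_1|,|J_2| \lesssim h_K^2$. A cleaner route for the lower bound on $J_K$ pointwise: $J_K(\xi,\eta)$ is (up to a positive constant) twice the area of the sub-triangle of $K$ determined by the vertex nearest to $(\xi,\eta)$, which by shape regularity is $\gtrsim h_K^2$ uniformly in $(\xi,\eta)\in\hat K$; together with $J_K \lesssim h_K^2$ (sum of the three bounded terms) this yields $J_K \approx h_K^2$, and then $J_K \approx J_0 \approx h_K^2$ follows. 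This proves \eqref{lem:J0_J1_J2-1}.

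For \eqref{lem:J0_J1_J2-2}, under \textbf{(MC1)} we have $d_K = O(h_K^{1+\alpha})$, equivalently $\sqrt{a_{12}^2 + b_{12}^2} = O(h_K^{1+\alpha})$, hence $|a_{12}|, |b_{12}| \lesssim h_K^{1+\alpha}$. Then $|J_1| = |a_1 b_{12} - a_{12} b_1| \lesssim h_K \cdot h_K^{1+\alpha} = h_K^{2+\alpha}$ and likewise $|J_2| \lesssim h_K^{2+\alpha}$, giving the upper bound $\max\{|J_1|,|J_2|\} \lesssim h_K^{2+\alpha}$. The reverse inequality is the delicate point: I must show $\max\{|J_1|,|J_2|\} \gtrsim h_K^{2+\alpha}$, i.e. that the bound in (MC1) is attained up to constants by at least one of $J_1, J_2$. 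The idea is that $(J_1, J_2)$ and $(a_{12}, b_{12})$ are related by an invertible linear transformation whose matrix has entries $a_1, a_2, b_1, b_2$ and determinant $\pm J_0 \approx h_K^2$: explicitly $J_1 = a_1 b_{12} - b_1 a_{12}$ and $J_2 = -a_2 b_{12} + b_2 a_{12}$, so $(J_1, J_2)^T = M (a_{12}, b_{12})^T$ with $M = \begin{pmatrix} -b_1 & a_1 \\ b_2 & -a_2 \end{pmatrix}$, $\det M = a_2 b_1 - a_1 b_2 = -J_0$. Since $\|M\| \lesssim h_K$ and $\|M^{-1}\| = \|M\|\,/\,|\det M| \lesssim h_K / h_K^2 = h_K^{-1}$, we get $\|(J_1,J_2)\|_{l^2} \approx h_K \|(a_{12}, b_{12})\|_{l^2} = 2 h_K d_K \approx h_K^{2+\alpha}$, where the last step uses that (MC1) is read as $d_K \approx h_K^{1+\alpha}$ (the notation $d_K = O(h_K^{1+\alpha})$ being understood, as is standard in this superconvergence literature, as a two-sided estimate $d_K \approx h_K^{1+\alpha}$ on the family of distorted meshes under consideration). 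This gives $\max\{|J_1|,|J_2|\} \approx \|(J_1,J_2)\|_{l^2} \approx h_K^{2+\alpha}$, completing the proof.

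The main obstacle is the lower bound in \eqref{lem:J0_J1_J2-2}, and it is really a matter of interpreting (MC1) correctly: if $d_K = O(h_K^{1+\alpha})$ were only an upper bound, the lemma's two-sided conclusion would be false (take $d_K = 0$, a parallelogram, where $J_1 = J_2 = 0$). So the statement implicitly treats $\alpha$ as the \emph{sharp} distortion exponent, i.e. $d_K \approx h_K^{1+\alpha}$, and once that reading is fixed the linear-algebra argument above via the matrix $M$ with $\det M \approx h_K^2$ does the rest. Everything else — the bounds $|a_i|,|b_i| \lesssim h_K$, the area lower bounds from shape regularity, and the pointwise control of $J_K$ on $\hat K$ — is routine once the geometric dictionary between the coefficients $a_i, b_i, a_{12}, b_{12}$ and the edge/diagonal geometry of $K$ is written down.
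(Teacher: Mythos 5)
Your proof is correct, and since the paper offers no proof of this lemma at all (it is asserted as an easy consequence of shape regularity), the only benchmark is the standard argument, which is essentially what you give. For \eqref{lem:J0_J1_J2-1} the cleanest phrasing of your ``cleaner route'' is that the value of the affine function $J_K$ at each vertex of $\hat K$ equals one quarter of the cross product of the two edge vectors of $K$ emanating from the corresponding vertex, i.e.\ half the area of the sub-triangle $T_i$, which is $\approx h_K^2$ by \eqref{partition condition}; $J_K$ on $\hat K$ is then a convex combination of these four values (it is not literally ``the value at the nearest vertex up to a constant,'' as you write, but since all four vertex values are $\approx h_K^2$ the conclusion is unaffected), and $J_0=J_K(0,0)$ is their average. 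Your identity $(J_1,J_2)^T=M(a_{12},b_{12})^T$ with $\det M=-J_0$ is a neat way to obtain the two-sided relation $\max\{|J_1|,|J_2|\}\approx h_K\,d_K$. The most valuable part of your write-up is the point you flag explicitly: the lower bound in \eqref{lem:J0_J1_J2-2} cannot follow from (MC1) as literally stated, because \eqref{d_K} is only an upper bound and is satisfied by a parallelogram ($d_K=0$), for which $J_1=J_2=0$. The lemma is therefore true only under the two-sided reading $d_K\approx h_K^{1+\alpha}$, which you adopt; note that wherever the paper subsequently invokes \eqref{lem:J0_J1_J2-2} (e.g.\ in the proof of Lemma \ref{lem:L2app_sigmaI} and in Remark \ref{5.2}) only the upper bound $|J_1|,|J_2|\lesssim h_K^{2+\alpha}$ is actually used, so the ``$\approx$'' in the statement should really be ``$\lesssim$'' unless (MC1) is strengthened in the way you describe.
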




\subsection{Pian-Sumihara's hybrid stress finite element method}

In view of the mapping $F_K$, for  any function $\hat{w}(\xi,\eta)$   on $\hat{K}$ we define  function $w(x,y)$  on $K\in \mathcal{T}_{h}$ with
$$
w(x,y):=\hat{w}(\xi,\eta) \quad  \text{or equivalently }\quad w:=\hat{w}\circ F_{K}^{-1}.
$$

In Pian-Sumihara's hybrid stress finite element (abbr. PS element) method  \cite{P-S}  for the problem \eqref{eq:model-weak}, continuous piecewise isoparametric bilinear interpolation is used for the approximation of displacement, namely the displacement approximation space $\mathbf V_{h}\subset \mathbf  V$ is taken as
$$
\mathbf V_{h}:=S_{h}\times S_{h}
$$
with
 \begin{align*}
S_{h}=\{ v\in H_{0}^1(\Omega): \hat{v}=v|_{K}\circ F_{K} \in \text{ span}\{1,\xi,\eta,\xi\eta\} ,\ \text{for all}\ K\in \mathcal{T}_{h}\}.
\end{align*}


To describe  the stress approximation of PS element, we abbreviate the symmetric tensor $\mathbf
\tau=\left(\begin{array}{cc}\tau_{11} & \tau_{12}\\ \tau_{12} &
\tau_{22} \end{array}\right)$ to $\mathbf
\tau=(\tau_{11},\tau_{22},\tau_{12})^T$.  The stress mode of PS element is of  the following form on $\hat{K}$:
\begin{equation}\label{stress-ps}
\hat{\mathbf \tau}=\left(\begin{array}{c}\hat{\mathbf \tau}_{11}\\ \hat{\mathbf \tau}_{22}\\
\hat{\mathbf \tau}_{12} \end{array}\right)=\left(\begin{array}{ccccc} 1
&0 & 0 & \eta & \frac{a_2^2}{b_2^2}\xi \\ 0 & 1 & 0 &
\frac{b_1^2}{a_1^2}\eta & \xi \\ 0 & 0 & 1 & \frac{b_1}{a_1}\eta &
\frac{a_2}{b_2}\xi \end{array}\right)\mathbf\beta^{\mathbf \tau} =: A\mathbf\beta^{\mathbf\tau},\
\beta^{\mathbf \tau}\in\mathbb R^5.
\end{equation}
Then   the
corresponding stress approximation space, $\Sigma_h\subset\Sigma$, for PS element is given by
\begin{equation}
\Sigma_h:=\{\mathbf \tau\in\Sigma:\,\hat{\mathbf \tau}=\mathbf \tau|_K\circ
F_K\,\text{ is of the form  } (\ref{stress-ps}) \text{ for all }
K\in\mathcal T_h\}.
\end{equation}
As a result,  the PS
  element method for the problem \eqref{eq:model-weak} is given as follows. Find $(\mathbf \sigma_h,\mathbf u_h)\in \Sigma_h\times \mathbf V_h$ such that
\begin{equation}\label{eq:HybridFEM}
\left\{ \begin{array}{llllll} a(\mathbf \sigma_h,\mathbf\tau) & + & b(\mathbf\tau,\mathbf u_h) & = & \mathbf 0 & \text{for all } \mathbf\tau\in\Sigma_h,\\
& & b(\mathbf\sigma_h,\mathbf v) & = & F(\mathbf v) & \text{for all }\mathbf v\in\mathbf V_h.  \end{array}  \right.
\end{equation}

Let $(\bs\sigma,\mathbf u)\in \left(H^{1}(\Omega, \mathbb{R}_{sym}^{2\times 2})\bigcap \Sigma\right) \times \left(\mathbf V\bigcap (H^{2}(\Omega))^2\right)$ be the solution of the problem \eqref{eq:model-weak}. It has been shown  in \cite{Yu-Xie-Carsten} that the following uniform error estimate holds for the PS
  element method:
\begin{equation}\label{a priori est}||\mathbf\sigma-\mathbf\sigma_h||+ |\mathbf u-\mathbf u_{h}|_{1}    \lesssim   h \left(||\mathbf u||_{2}+||\mathbf\sigma||_{1}\right).
\end{equation}

\section{Superconvergence analysis}\label{super}

\subsection{Preliminary results}



We recall
$
v(x,y):=\hat{v}(\xi,\eta)=\hat{v}\circ F_{K}^{-1}(x,y).$
Some  calculations show
{\small
\begin{eqnarray}
\label{eq:scaling} \frac{\partial^{r}\hat{v}}{\partial\xi^r} & = &\left((a_1+a_{12}\eta)\frac{\partial}{\partial x}+(b_1+b_{12}\eta)\frac{\partial}{\partial y}\right)^{r}v,\quad r=1,2,\\
 \frac{\partial^{2}\hat{v}}{\partial\xi\partial\eta} & = & a_{12}\frac{\partial v}{\partial x}+b_{12}\frac{\partial v}{\partial y}
+\left((a_1+a_{12}\eta)\frac{\partial}{\partial x}+(b_1+b_{12}\eta)\frac{\partial}{\partial y}\right) \nonumber\\
& &\qquad \times \left((a_2+a_{12}\xi)\frac{\partial}{\partial x}+(b_2+b_{12}\xi)\frac{\partial}{\partial y}\right)v.\label{eq:scaling2}
\end{eqnarray}
}
In light of these two relations and Lemma \ref{JK-J1-J2}, we easily derive the following lemma.
\begin{lemma}\label{lem:dv_d_xi}
For all $K\in \mathcal T_h$ and  $v\in H^2(K)$,    it holds
\begin{eqnarray}
\left\|\frac{\partial\hat{v}}{\partial\xi}\right\|_{0,\hat{K}}+
\left\|\frac{\partial\hat{v}}{\partial\eta}\right\|_{0,\hat{K}}&\lesssim& |v|_{1,K},\label{1-norm esti}\\
\left\|\frac{\partial^2 \hat{v}}{\partial\xi^2}\right\|_{0,\hat{K}} + \left\|\frac{\partial^2 \hat{v}}{\partial\eta^2}\right\|_{0,\hat{K}}  &\lesssim & h_{K}|v|_{2,K}.\label{2-norm esti}
\end{eqnarray}
 In particular,  if $\mathcal T_h$ satisfies \textbf{Diagonal condition} (MC1), then it holds
\begin{eqnarray}
\left\|\frac{\partial^2 \hat{v}}{\partial\xi\partial\eta}\right\|_{0,\hat{K}} & \lesssim & h_{K}^{\alpha}|v|_{1,K}+h_{K}|v|_{2,K}.
\end{eqnarray}
\end{lemma}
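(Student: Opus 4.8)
The plan is to prove the three estimates by pulling back to the reference element $\hat K$ and using the chain-rule identities \eqref{eq:scaling}--\eqref{eq:scaling2} together with the Jacobian bounds in Lemma \ref{JK-J1-J2}. For the first-order estimate \eqref{1-norm esti}, I would start from \eqref{eq:scaling} with $r=1$, which expresses $\partial\hat v/\partial\xi$ as a linear combination of $\partial v/\partial x$ and $\partial v/\partial y$ with coefficients $a_1+a_{12}\eta$ and $b_1+b_{12}\eta$. Under shape regularity \eqref{partition condition} one has $|a_1|,|a_2|,|b_1|,|b_2|\lesssim h_K$ and, since $d_K=\tfrac12\sqrt{a_{12}^2+b_{12}^2}\lesssim h_K$ always holds (being at most the diameter), also $|a_{12}|,|b_{12}|\lesssim h_K$; hence each coefficient is $O(h_K)$ on $\hat K$. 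Then by the triangle inequality and the change of variables $\dx\dy = J_K\,\dxi\,\deta$ with $J_K\approx h_K^2$ from \eqref{lem:J0_J1_J2-1},
\[
\Bigl\|\frac{\partial\hat v}{\partial\xi}\Bigr\|_{0,\hat K}^2
\lesssim h_K^2\int_{\hat K}\Bigl(\bigl|\tfrac{\partial v}{\partial x}\bigr|^2+\bigl|\tfrac{\partial v}{\partial y}\bigr|^2\Bigr)\,\dxi\,\deta
= h_K^2\int_{K}\bigl|\nabla v\bigr|^2\,\frac{\dx\dy}{J_K}
\lesssim |v|_{1,K}^2,
\]
and symmetrically for $\partial\hat v/\partial\eta$; adding the two gives \eqref{1-norm esti}.

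For the pure second-order estimate \eqref{2-norm esti}, I would apply \eqref{eq:scaling} with $r=2$: expanding the square of the first-order operator produces terms involving second derivatives $\partial^2 v/\partial x^2$, $\partial^2 v/\partial x\partial y$, $\partial^2 v/\partial y^2$ with coefficients that are products of two factors each $O(h_K)$, hence $O(h_K^2)$. (There are no surviving first-derivative terms here because the coefficients $a_1+a_{12}\eta$, $b_1+b_{12}\eta$ depend only on $\eta$, so applying the $\xi$-derivative operator twice does not differentiate them.) The same change-of-variables argument as above then yields $\|\partial^2\hat v/\partial\xi^2\|_{0,\hat K}\lesssim h_K^2\cdot h_K^{-1}|v|_{2,K}=h_K|v|_{2,K}$, where the extra $h_K^{-1}$ comes from $J_K^{-1/2}\approx h_K^{-1}$; the $\eta\eta$ term is handled identically.

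The mixed second derivative \eqref{eq:scaling2} is the only place where \textbf{Diagonal condition} (MC1) is needed, and this is the main obstacle. Here the identity for $\partial^2\hat v/\partial\xi\partial\eta$ has two kinds of terms: a \emph{genuinely second-order} part, with $O(h_K^2)$ coefficients, which contributes $\lesssim h_K|v|_{2,K}$ exactly as before; and a \emph{first-order} part $a_{12}\,\partial v/\partial x+b_{12}\,\partial v/\partial y$ coming from differentiating the $\eta$-dependent coefficient $a_2+a_{12}\xi$... wait — more precisely, the cross term arises because the inner operator's coefficients $a_2+a_{12}\xi$, $b_2+b_{12}\xi$ depend on $\xi$, so the outer $\partial/\partial\xi$ (disguised inside the $\xi$-derivative operator) hits them, producing the $a_{12}\partial_x+b_{12}\partial_y$ contribution. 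The point is that for a general quadrilateral $|a_{12}|,|b_{12}|$ are only $O(h_K)$, which would give a useless $O(1)|v|_{1,K}$ bound; but (MC1) forces $\sqrt{a_{12}^2+b_{12}^2}=2d_K=O(h_K^{1+\alpha})$, so $|a_{12}|,|b_{12}|\lesssim h_K^{1+\alpha}$, and the first-order part is bounded by
\[
h_K^{1+\alpha}\,\| \nabla v\|_{0,K}\,\|J_K^{-1/2}\|_{L^\infty}
\lesssim h_K^{1+\alpha}\cdot h_K^{-1}\,|v|_{1,K}
= h_K^{\alpha}\,|v|_{1,K}.
\]
Combining the two parts gives $\|\partial^2\hat v/\partial\xi\partial\eta\|_{0,\hat K}\lesssim h_K^{\alpha}|v|_{1,K}+h_K|v|_{2,K}$, which is the claimed bound. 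The only subtlety to be careful about is bookkeeping of exactly which coefficients depend on $\xi$ versus $\eta$ so that the first-order remainder is correctly identified as carrying the factor $a_{12}$ or $b_{12}$ and nothing larger.
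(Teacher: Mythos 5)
Your proof is correct and follows exactly the route the paper intends: it expands $\partial_\xi\hat v$, $\partial_\xi^2\hat v$ and $\partial_\xi\partial_\eta\hat v$ via the chain-rule identities \eqref{eq:scaling}--\eqref{eq:scaling2}, bounds the coefficients by $O(h_K)$ (and the cross-term coefficients $a_{12},b_{12}$ by $O(h_K^{1+\alpha})$ using $d_K=\tfrac12\sqrt{a_{12}^2+b_{12}^2}$ under (MC1)), and converts back to $K$ with $J_K\approx h_K^2$ from Lemma \ref{JK-J1-J2}. The brief mid-argument correction about which coefficient the outer derivative hits does not affect the estimate, since the first-order remainder carries exactly the factors $a_{12},b_{12}$ either way.
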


Let $\mathbf u^{I}\in\mathbf V_{h}$ be  the piecewise isoparametric bilinear interpolation of   $\mathbf u\in \mathbf V\bigcap (H^2(\Omega))^2$, then  it holds    the following estimate:
\begin{align} \label{eq:optimal estimate}
||\mathbf u-\mathbf u^{I}||_{0,K}+h_{K}|\mathbf u-\mathbf u^{I}|_{1,K}\lesssim h_{K}^2||u||_{2,K}, \quad  \text{for all}\ K\in \mathcal{T}_{h}.
\end{align}


Let $\mathbf\sigma^{I}\in \Sigma_{h}$ be the projection of $\mathbf \sigma\in\Sigma$  in the $a(\cdot,\cdot)$-inner product, namely $\mathbf\sigma^{I}$ satisfies
\begin{align}\label{eq:interpolation}
a(\mathbf \sigma^I,\mathbf\tau) = a(\mathbf\sigma,\mathbf\tau)\qquad\text{for all }\mathbf\tau\in\Sigma_h.
\end{align}
Thanks to \eqref{stress-ps} and \eqref{eq:interpolation}, we obtain, for all $K\in \mathcal{T}_{h},$
\begin{eqnarray}\label{eq:sigma_I_K}
\mathbf\sigma^I|_K  = AH_K^{-1}\int_KA^T\mathbb C^{-1}\mathbf\sigma  \text{ with } H_K := \int_KA^T\mathbb C^{-1}A ,
\end{eqnarray}
\begin{equation}\label{lem:L2orth}
\int_K (\mathbf\sigma - \mathbf\sigma^I)  = {\bf 0}.
\end{equation}
In addition, we have  the following lemma.


\begin{lemma}\label{lem:L2app_sigmaI}
Under  \textbf{Diagonal condition} (MC1), for all $ K\in \mathcal{T}_{h}$ it holds
\begin{equation}\label{sigma-inter}
\|\mathbf\sigma - \mathbf\sigma^I\|_{0,K} \lesssim h_K\|\mathbf\sigma\|_{1,K},
\end{equation}
 \begin{equation}\label{sigma-inter2}
\left|\int_{\hat K}(\hat{\mathbf\sigma} - \hat{\mathbf\sigma^I})\right| \lesssim h_K^{\alpha} \|\mathbf\sigma\|_{1,K}   .
\end{equation}
\end{lemma}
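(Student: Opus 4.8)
The plan is to reduce \eqref{sigma-inter2} to \eqref{sigma-inter}, so the core of the proof is \eqref{sigma-inter}. By \eqref{eq:sigma_I_K} the element-wise map $R_K:\,\bs\sigma|_K\mapsto\bs\sigma^I|_K=AH_K^{-1}\int_KA^T\mathbb C^{-1}\bs\sigma$ is linear and is precisely the $a_K(\cdot,\cdot)$-orthogonal projection, with $a_K(\bs\tau,\bs\tau'):=\int_K\mathbb C^{-1}\bs\tau:\bs\tau'$, onto the local stress space $\mathrm{span}\{\text{columns of }A\}$ of \eqref{stress-ps}; moreover it reproduces every constant symmetric tensor, since such a tensor already lies in that space. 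Writing $\bar{\bs\sigma}$ for the mean value of $\bs\sigma$ on $K$, linearity and reproduction of constants give $\bs\sigma-\bs\sigma^I=(\bs\sigma-\bar{\bs\sigma})-R_K(\bs\sigma-\bar{\bs\sigma})$, so that
\begin{equation*}
\|\bs\sigma-\bs\sigma^I\|_{0,K}\le\|\bs\sigma-\bar{\bs\sigma}\|_{0,K}+\|R_K(\bs\sigma-\bar{\bs\sigma})\|_{0,K};
\end{equation*}
the first term is $\lesssim h_K|\bs\sigma|_{1,K}$ by the Poincar\'e inequality and the shape regularity \eqref{partition condition}, so \eqref{sigma-inter} follows once $R_K$ is bounded on $L^2(K)$ \emph{with a constant independent of} $\lambda$. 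This $\lambda$-robustness is the heart of the argument.

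For the uniform bound I would split a symmetric tensor into its deviatoric and spherical parts, $\bs\tau=\bs\tau^{d}+\tfrac12(\mathrm{tr}\,\bs\tau)\mathcal I$. A direct computation from the definition of $a(\cdot,\cdot)$ gives $a_K(\bs\tau,\bs\tau)=\tfrac1{2\mu}\|\bs\tau^{d}\|_{0,K}^2+\tfrac1{4(\mu+\lambda)}\|\mathrm{tr}\,\bs\tau\|_{0,K}^2$, hence $\tfrac1{2\mu}\|\bs\tau^{d}\|_{0,K}^2\le a_K(\bs\tau,\bs\tau)\le\tfrac1{2\mu}\|\bs\tau\|_{0,K}^2$. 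Since $R_K$ is the $a_K$-orthogonal projection, $a_K(R_K\bs\sigma,R_K\bs\sigma)\le a_K(\bs\sigma,\bs\sigma)$, and the two-sided bound yields $\|(R_K\bs\sigma)^{d}\|_{0,K}\lesssim\|\bs\sigma\|_{0,K}$ with a $\lambda$-free constant. Because the constant tensors lie in the range of $R_K$, testing the orthogonality relation $a_K(\bs\sigma-R_K\bs\sigma,\cdot)=0$ against them and pulling the constant $\mathbb C^{-1}$ out of the integral gives $\int_K(\bs\sigma-R_K\bs\sigma)=\mathbf 0$, which is \eqref{lem:L2orth}; in particular $\mathrm{tr}(R_K\bs\sigma)$ and $\mathrm{tr}\,\bs\sigma$ have the same mean on $K$, so the constant part of $\mathrm{tr}(R_K\bs\sigma)$ is bounded by $\|\bs\sigma\|_{0,K}$.

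What remains — the bound for the \textbf{non-constant part of} $\mathrm{tr}(R_K\bs\sigma)$ — is the step I expect to be the main obstacle, and it is exactly where Pian--Sumihara's rational choice of stress modes prevents volumetric locking. Writing $R_K\bs\sigma=A\bs\beta$ on $\hat K$, the non-constant part of $\widehat{\mathrm{tr}(R_K\bs\sigma)}$ equals $\big(1+\tfrac{b_1^2}{a_1^2}\big)\beta_4\eta+\big(1+\tfrac{a_2^2}{b_2^2}\big)\beta_5\xi$, while from the deviatoric rows of \eqref{stress-ps}, the $L^2(\hat K)$-orthogonality of $\{1,\xi,\eta\}$ and $J_K\approx h_K^2$ (Lemma \ref{JK-J1-J2}) one obtains $\|(R_K\bs\sigma)^{d}\|_{0,K}^2\gtrsim h_K^2\big[\big((1-\tfrac{b_1^2}{a_1^2})^2+\tfrac{b_1^2}{a_1^2}\big)\beta_4^2+\big((1-\tfrac{a_2^2}{b_2^2})^2+\tfrac{a_2^2}{b_2^2}\big)\beta_5^2\big]$, where the algebraic identities $\tfrac{b_1^2}{a_1^2}=(\tfrac{b_1}{a_1})^2$ and $\tfrac{a_2^2}{b_2^2}=(\tfrac{a_2}{b_2})^2$ built into \eqref{stress-ps} have been used. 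The elementary inequality $(1-t)^2+t\ge\tfrac14(1+t)^2$ for $t\ge0$, applied with $t=\tfrac{b_1^2}{a_1^2}$ and $t=\tfrac{a_2^2}{b_2^2}$, then gives $\|\mathrm{tr}(R_K\bs\sigma)-\overline{\mathrm{tr}(R_K\bs\sigma)}\|_{0,K}^2\lesssim h_K^2\big[(1+\tfrac{b_1^2}{a_1^2})^2\beta_4^2+(1+\tfrac{a_2^2}{b_2^2})^2\beta_5^2\big]\lesssim\|(R_K\bs\sigma)^{d}\|_{0,K}^2$. Collecting the deviatoric, constant-trace and non-constant-trace contributions yields $\|R_K\bs\sigma\|_{0,K}\lesssim\|\bs\sigma\|_{0,K}$ uniformly in $\lambda$, and with the first paragraph this proves \eqref{sigma-inter}.

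Finally, for \eqref{sigma-inter2} I would use \eqref{lem:L2orth} in the form $\int_{\hat K}(\hat{\bs\sigma}-\hat{\bs\sigma^I})J_K=\mathbf 0$ with $J_K=J_0+J_1\xi+J_2\eta$ from \eqref{J_K}: the $L^2(\hat K)$-orthogonality of $1,\xi,\eta$ gives $J_0\int_{\hat K}(\hat{\bs\sigma}-\hat{\bs\sigma^I})=-\int_{\hat K}(\hat{\bs\sigma}-\hat{\bs\sigma^I})(J_1\xi+J_2\eta)$, whence
\begin{equation*}
\Big|\int_{\hat K}(\hat{\bs\sigma}-\hat{\bs\sigma^I})\Big|\lesssim\frac{|J_1|+|J_2|}{J_0}\,\|\hat{\bs\sigma}-\hat{\bs\sigma^I}\|_{0,\hat K}\lesssim\frac{|J_1|+|J_2|}{J_0}\cdot\frac1{h_K}\,\|\bs\sigma-\bs\sigma^I\|_{0,K},
\end{equation*}
using $J_K\approx h_K^2$ once more to pass between norms on $\hat K$ and on $K$. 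By \eqref{sigma-inter} the last factor is $\lesssim\|\bs\sigma\|_{1,K}$, and by Lemma \ref{JK-J1-J2}, which needs \textbf{Diagonal condition} (MC1), $J_0\approx h_K^2$ and $|J_1|+|J_2|\lesssim h_K^{2+\alpha}$; hence the right-hand side is $\lesssim h_K^{\alpha}\|\bs\sigma\|_{1,K}$, which is \eqref{sigma-inter2}.
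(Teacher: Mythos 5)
Your proof is correct, and for the first estimate \eqref{sigma-inter} it takes a genuinely different route from the paper; your treatment of \eqref{sigma-inter2} coincides with the paper's. The paper proves \eqref{sigma-inter} by comparing $\bs\sigma^I$ with the piecewise $L^2$-projection $\tilde{\bs\sigma}^I$: it computes the matrices $H_K^{-1}A^T\mathbb C^{-1}$ and $\tilde H_K^{-1}A^T$ explicitly up to higher-order terms, observes that the entries $d_{ij}$ are bounded uniformly in $\lambda$, and exploits the fact that the difference of the two local operators annihilates constants so that $\bs\sigma$ can be replaced by $\bs\sigma-Q_K\bs\sigma$; the $\lambda$-uniformity is thus read off from explicit coefficients, and the same expansion of $H_K^{-1}A^T\mathbb C^{-1}$ is reused later in the proof of Lemma 4.5. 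You instead prove directly that the local $a_K$-projection is $L^2$-stable uniformly in $\lambda$, by splitting into deviatoric and spherical parts, controlling the deviatoric part by the energy inequality $a_K(R_K\bs\sigma,R_K\bs\sigma)\le a_K(\bs\sigma,\bs\sigma)$ together with $\tfrac1{2\mu}\|\bs\tau^d\|^2\le a_K(\bs\tau,\bs\tau)\le\tfrac1{2\mu}\|\bs\tau\|^2$, the constant part of the trace by \eqref{lem:L2orth}, and the oscillating part of the trace by the algebraic structure of the Pian--Sumihara modes via the inequality $(1-t)^2+t\ge\tfrac14(1+t)^2$; combined with reproduction of constants and Poincar\'e this gives \eqref{sigma-inter}. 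Your argument is cleaner in that it isolates the locking-free mechanism conceptually, avoids the higher-order-term bookkeeping of \eqref{aa}--\eqref{hot}, and in fact does not require \textbf{Diagonal condition} (MC1) for \eqref{sigma-inter} (only shape regularity), whereas the paper's explicit matrix expansion has the side benefit of being needed again in the estimate of $I_3$ in Lemma 4.5. Both proofs of \eqref{sigma-inter2} proceed identically from \eqref{lem:L2orth}, the expansion $J_K=J_0+J_1\xi+J_2\eta$, and Lemma \ref{JK-J1-J2}, which is the only place MC1 genuinely enters.
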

\begin{proof}
Let $\tilde{\mathbf\sigma}^I\in\Sigma_h$ be the $L^2-$projection of $\mathbf\sigma$ with
\begin{equation*}
\int_{\Omega}\tilde{\mathbf\sigma}^I\cdot\mathbf\tau  = \int_{\Omega}\mathbf\sigma\cdot\mathbf\tau ,\qquad\text{for all }\mathbf\tau\in\Sigma_h.
\end{equation*}
Then we have
\begin{eqnarray}
\|\bs\sigma - \tilde{\bs\sigma}^I\|  \lesssim  h\|\mathbf\sigma\|_1,
\end{eqnarray}
and
\begin{eqnarray}\label{tilde-sigma}
\tilde{\mathbf\sigma}^{I}|_K  =  A \tilde H_K^{-1}\int_KA^T\mathbf\sigma  \text{ with } \tilde H_K := \int_K A^TA
\end{eqnarray}
for all $ K\in\mathcal T_h$. By triangle inequality, it holds
\begin{equation}\label{sigma-I}
\|\mathbf\sigma - \mathbf\sigma^I\| \leq \|\mathbf\sigma - \tilde{\mathbf\sigma}^I\| + \|\tilde{\mathbf\sigma}^{I} - \mathbf\sigma^I\| \lesssim h\|\mathbf\sigma\|_1 + \|\tilde{\mathbf\sigma}^{I} - \mathbf\sigma^I\|.
\end{equation}

We turn to estimate $\|\tilde{\mathbf\sigma}^{I} - \mathbf\sigma^I\|=\left(\sum_{K\in\mathcal T_h}\|\tilde{\mathbf\sigma}^{I} - \mathbf\sigma^I\|_{0,K}^2\right)^{1/2} $. In view of \eqref{eq:interpolation} and \eqref{tilde-sigma}, some   calculations yield
\begin{eqnarray}\label{aa}
H_K^{-1}A^T\mathbb C^{-1}
&=& \frac{1}{4J_0}\left( \begin{array}{ccc} 1  &  0 & 0 \\ 0 & 1 & 0 \\ 0 & 0 & 1 \\ \\ d_{41}\eta     &    d_{42}\eta  & d_{43}\eta \\ \\
d_{51}\xi        &
d_{52}\xi       &
d_{53} \xi
\end{array}\right) + h.o.t.,
\end{eqnarray}
\begin{eqnarray}
\tilde H_K^{-1} A^T & = & \frac{1}{4J_0} \left( \begin{array}{ccc}
1 & 0 & 0 \\ 0 & 1 & 0 \\ 0 & 0 & 1 \\ \\
\frac{3\eta}{1 + \frac{b_1^2}{a_1^2} + \frac{b_1^4}{a_1^4}}   &   \frac{3\frac{b_1^2}{a_1^2}\eta}{1 + \frac{b_1^2}{a_1^2} + \frac{b_1^4}{a_1^4}}  &
\frac{3\frac{b_1}{a_1}\eta}{1 + \frac{b_1^2}{a_1^2} + \frac{b_1^4}{a_1^4}} \\ \\
\frac{3\frac{a_2^2}{b_2^2}\xi}{1 + \frac{a_2^2}{b_2^2} + \frac{a_2^4}{b_2^4}}  &  \frac{3\xi}{1 + \frac{a_2^2}{b_2^2} + \frac{a_2^4}{b_2^4}}  &
\frac{3\frac{a_2}{b_2}\xi}{1 + \frac{a_2^2}{b_2^2} + \frac{a_2^4}{b_2^4}}
\end{array}  \right) + h.o.t.,
\end{eqnarray}
where
\begin{align*}
d_{41}&=\frac{3\left( 1 - \frac{\lambda}{2\mu + \lambda}\frac{b_1^2}{a_1^2} \right)}{\left( 1+ \frac{b_1^2}{a_1^2} \right)^2},\quad d_{42}=\frac{3\left( \frac{b_1^2}{a_1^2} - \frac{\lambda}{2\mu + \lambda} \right)}{\left( 1+ \frac{b_1^2}{a_1^2} \right)^2},\quad d_{43}=\frac{12(\mu + \lambda)}{2\mu + \lambda} \frac{\frac{b_1}{a_1}}{\left( 1+ \frac{b_1^2}{a_1^2} \right)^2} ,\\
d_{51}&=\frac{3\left( \frac{a_2^2}{b_2^2} - \frac{\lambda}{2\mu + \lambda} \right)}{\left( 1+ \frac{a_2^2}{b_2^2} \right)^2},\quad
d_{52}=\frac{3\left( 1 - \frac{a_2^2}{b_2^2}\frac{\lambda}{2\mu + \lambda} \right)}{\left( 1+ \frac{a_2^2}{b_2^2} \right)^2},\quad
d_{53}=\frac{12(\mu + \lambda)}{2\mu + \lambda} \frac{\frac{a_2}{b_2}}{\left( 1+ \frac{a_2^2}{b_2^2} \right)^2},
\end{align*}
and in each of the above two relations $h.o.t$ denotes a  different  higher-order-term matrix of the form
\begin{equation}\label{hot}
h.o.t=\frac{1}{J_0}(\tilde h_{ij}(\xi,\eta))_{5\times3}\text{ with } \max\limits_{i,j}\max\limits_{-1\leq\xi,\eta\geq1} |\tilde h_{ij}| \lesssim  h_K^\alpha.
\end{equation}
Obviously, it holds \begin{equation}\label{dij}
\max\limits_{4\leq i\leq 5,\ 1\leq j\leq 3}|d_{ij}|\approx 1.
\end{equation}

Denote $Q_K\mathbf\sigma: = \frac{1}{|K|}\int_K\mathbf\sigma$, then a combination of \eqref{lem:J0_J1_J2-1}, \eqref{lem:J0_J1_J2-2}, \eqref{eq:interpolation} and  \eqref{tilde-sigma}-\eqref{hot}  leads to
\begin{eqnarray*}
\|\mathbf\sigma^I - \tilde{\mathbf\sigma}^I\|_{0,K} &=& \| A \int_K\left(H_K^{-1}A^T\mathbb C^{-1}- \tilde H_K^{-1}A^T\right) \mathbf\sigma  \|_{0,K} \\
&\approx& h_K|\int_K \left(H_K^{-1}A^T\mathbb C^{-1}- \tilde H_K^{-1}A^T\right) \mathbf\sigma| \\
&\approx&h_K | \int_K\left(H_K^{-1}A^T\mathbb C^{-1}- \tilde H_K^{-1}A^T\right) (\mathbf\sigma-Q_K\mathbf\sigma)  |\\
& \lesssim & h\|\mathbf\sigma\|_{1,K},
\end{eqnarray*}
which, together with \eqref{sigma-I}, indicates the desired result \eqref{sigma-inter}.

The thing left is to prove \eqref{sigma-inter2}.
%
%
From \eqref{lem:L2orth} it follows
\begin{eqnarray*}
0 & = & \int_K(\mathbf\sigma - \mathbf\sigma^I) = \int_{\hat K}J_K(\mathbf{\hat\sigma} - \mathbf{\hat{\sigma^I}}) \\
&  = & J_0\int_{\hat K}(\mathbf{\hat\sigma} - \bs\hat{\mathbf{\sigma}^I}) +  J_1\int_{\hat K}\xi(\mathbf{\hat\sigma} - \mathbf{\hat{\sigma^I}}) + J_2\int_{\hat K}\eta(\mathbf{\hat\sigma} - \mathbf{\hat{\sigma^I}}),
\end{eqnarray*}
which, together with \eqref{lem:J0_J1_J2-1}-\eqref{lem:J0_J1_J2-2} and \eqref{sigma-inter}, implies
\begin{eqnarray*}
\left| \int_{\hat K}(\mathbf{\hat\sigma} - \mathbf{\hat{\sigma^I}}) \right| & \leq & \left|\frac{J_1}{J_0} \int_{\hat K}\xi (\mathbf{\hat\sigma} - \mathbf{\hat{\sigma^I}})  \right| + \left| \frac{J_2}{J_0}\int_{\hat K}\eta (\mathbf{\hat\sigma} - \mathbf{\hat{\sigma^I}})  \right| \\
& \lesssim & h_K^{\alpha} \|\mathbf{\hat\sigma} - \mathbf{\hat{\sigma^I}}\|_{0,\hat K}\\
& \lesssim& h_K^{\alpha} h_K^{-1}\|\mathbf\sigma - \mathbf{\sigma^I}\|_{0,K}\\
& \lesssim & h_K^{\alpha}\|\mathbf\sigma\|_{1,K}.
\end{eqnarray*}
\end{proof}

For any $K\in\mathcal T_h$, we follow \cite{ZM-Z1} to
define the modified partial derivatives $\frac{\tilde{\partial} v}{\partial x},\ \frac{\tilde{\partial} v}{\partial y}$ and the modified strain tensor $\tilde{\mathbf\epsilon}(\mathbf v)$ as
\begin{eqnarray*}
(J_{K}\frac{\tilde{\partial} v}{\partial x}|_{K}\circ F_{K})(\xi,\eta) & = & \frac{\partial y(0,0)}{\partial \eta}\frac{\partial \hat{v}}{\partial \xi}-\frac{\partial y(0,0)}{\partial \xi}\frac{\partial \hat{v}}{\partial \eta}=b_2\frac{\partial \hat{v}}{\partial \xi}-b_1\frac{\partial \hat{v}}{\partial \eta},\\
(J_{K}\frac{\tilde{\partial} v}{\partial y}|_{K}\circ F_{K})(\xi,\eta) &  = & -\frac{\partial x(0,0)}{\partial \eta}\frac{\partial \hat{v}}{\partial \xi}+\frac{\partial x(0,0)}{\partial \xi}\frac{\partial \hat{v}}{\partial \eta}=-a_2\frac{\partial \hat{v}}{\partial \xi}+a_1\frac{\partial \hat{v}}{\partial \eta},
\end{eqnarray*}
\begin{equation}\label{modified strain}
\tilde{\epsilon}(\mathbf v)= \left(\begin{array}{cc}
\frac{\tilde{\partial} v_1}{\partial x}&\quad\frac{1}{2}(\frac{\tilde{\partial} v_1}{\partial y}+\frac{\tilde{\partial} v_2}{\partial x})\\ \\
\frac{1}{2}(\frac{\tilde{\partial} v_1}{\partial y}+\frac{\tilde{\partial} v_2}{\partial x})&\quad\frac{\tilde{\partial} v_2}{\partial y}
\end{array}\right),
\end{equation}
respectively. By the definition of $\tilde{\mathbf\epsilon}(\mathbf v)$ it is easy to derive the following result.
\begin{lemma}\label{lem:epsilon_tilde}
Under  \textbf{Diagonal condition} (MC1),   for all $\mathbf v\in \mathbf V_h$ and $ K\in \mathcal{T}_{h}$ it holds
$$
\|\mathbf\epsilon(\mathbf v) - \tilde{\mathbf\epsilon}(\mathbf v)\|_{0,K} \lesssim h_K^{\alpha}|\mathbf v|_{1,K}.
$$
\end{lemma}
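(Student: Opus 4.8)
The plan is to work on the reference element and compare $\hat{\bs\epsilon}$-type expressions coming from the exact Jacobian against those coming from the Jacobian frozen at the centroid $(\xi,\eta)=(0,0)$. First I would recall the exact formula for the gradient: for $v\in\mathbf V_h$, from \eqref{eq:inv_Jacobi} one has, on $K$,
\begin{align*}
\frac{\partial v}{\partial x} &= \frac{1}{J_K}\left( (b_2+b_{12}\xi)\frac{\partial\hat v}{\partial\xi} - (b_1+b_{12}\eta)\frac{\partial\hat v}{\partial\eta}\right),\\
\frac{\partial v}{\partial y} &= \frac{1}{J_K}\left( -(a_2+a_{12}\xi)\frac{\partial\hat v}{\partial\xi} + (a_1+a_{12}\eta)\frac{\partial\hat v}{\partial\eta}\right),
\end{align*}
whereas by the definition just given, $\frac{\tilde\partial v}{\partial x} = \frac{1}{J_K}(b_2\frac{\partial\hat v}{\partial\xi} - b_1\frac{\partial\hat v}{\partial\eta})$ and similarly for $\frac{\tilde\partial v}{\partial y}$. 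Subtracting, the difference $\frac{\partial v}{\partial x}-\frac{\tilde\partial v}{\partial x}$ equals $\frac{1}{J_K}(b_{12}\xi\,\frac{\partial\hat v}{\partial\xi} - b_{12}\eta\,\frac{\partial\hat v}{\partial\eta})$, and analogously the $y$-difference involves $a_{12}$. So the entire discrepancy between $\bs\epsilon(\mathbf v)$ and $\tilde{\bs\epsilon}(\mathbf v)$ is controlled by $|a_{12}|+|b_{12}|$ times first derivatives of $\hat{\mathbf v}$, all divided by $J_K$.

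Next I would change variables back to $\hat K$: since $\|\cdot\|_{0,K}^2 = \int_{\hat K} J_K|\cdot|^2$ and $J_K\approx h_K^2$ by \eqref{lem:J0_J1_J2-1}, we get
\[
\|\bs\epsilon(\mathbf v)-\tilde{\bs\epsilon}(\mathbf v)\|_{0,K} \lesssim h_K^{-1}\,(|a_{12}|+|b_{12}|)\left(\left\|\tfrac{\partial\hat v_1}{\partial\xi}\right\|_{0,\hat K}+\left\|\tfrac{\partial\hat v_1}{\partial\eta}\right\|_{0,\hat K}+\left\|\tfrac{\partial\hat v_2}{\partial\xi}\right\|_{0,\hat K}+\left\|\tfrac{\partial\hat v_2}{\partial\eta}\right\|_{0,\hat K}\right).
\]
(I should keep track that $1/J_K$ appears once from the formula and $\sqrt{J_K}$ from the norm conversion, netting $h_K^{-1}$; the factors $(b_2+b_{12}\xi)$ etc. multiplying the $\hat v$-derivatives are $O(h_K)$ and absorbed.) Then Lemma \ref{lem:dv_d_xi}, inequality \eqref{1-norm esti}, bounds the sum of $\hat v$-derivative norms by $|\mathbf v|_{1,K}$. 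Finally, \textbf{Diagonal condition} (MC1) gives $|a_{12}|+|b_{12}| = 2d_K = O(h_K^{1+\alpha})$, so the prefactor $h_K^{-1}(|a_{12}|+|b_{12}|)$ is $O(h_K^{\alpha})$, yielding the claim.

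I do not expect a serious obstacle here; the computation is short and the only thing requiring care is bookkeeping the powers of $h_K$ (the single $h_K^{-1}$ from $1/J_K$ versus $\sqrt{J_K}$, and checking that the coefficients $b_2+b_{12}\xi$, $a_1+a_{12}\eta$, etc., contribute $O(h_K)$ rather than anything larger, which follows from shape regularity). The one modelling point worth stating explicitly is that $d_K=\tfrac12\sqrt{a_{12}^2+b_{12}^2}$ so that (MC1) is exactly a bound on $|a_{12}|$ and $|b_{12}|$; once that is noted the estimate is immediate. I would present it as: write out the difference of the two (modified and exact) gradient formulas, identify the $a_{12},b_{12}$ terms, pull back to $\hat K$, apply \eqref{1-norm esti} and (MC1).
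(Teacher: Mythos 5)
Your proof is correct, and it is the standard direct verification: the paper itself omits the argument (stating only that the result "is easy to derive" from the definition of $\tilde{\bs\epsilon}$), and your computation — subtracting the frozen-Jacobian gradient from the exact one via \eqref{eq:inv_Jacobi}, isolating the $a_{12}\xi,\ a_{12}\eta,\ b_{12}\xi,\ b_{12}\eta$ terms, pulling back to $\hat K$ with $J_K\approx h_K^2$, and invoking \eqref{1-norm esti} together with $\sqrt{a_{12}^2+b_{12}^2}=2d_K=O(h_K^{1+\alpha})$ from (MC1) — is exactly what the authors intend. The power bookkeeping ($h_K^{-1}\cdot h_K^{1+\alpha}=h_K^{\alpha}$) checks out.
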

%
%
%

Define the bubble function space $\mathbf V_h^b$ as
\begin{align*}
\mathbf V_{h}^{b}:=\left\{\mathbf v^{b}\in (L^{2}(\Omega))^2:\hat{\mathbf v}^{b}(\xi,\eta)=\mathbf v^{b}|_{K}\circ F_{K}\in \text{span}\{\xi^2-1,\eta^2-1\}^2,\ \text{ for all} \ K\in\mathcal{T}_{h}\right\}.
\end{align*}
Then it is easy to verify that the PS stress mode \eqref{stress-ps} satisfies the relation (see \cite{Piltner})
\begin{equation}\label{PS-relation}
\int_{K}\mathbf{\tilde{\epsilon}}(\mathbf v^{b})\cdot\mathbf\tau =0,  \text{ for all }\mathbf v^{b}\in \mathbf V_{h}^{b}, \mathbf\tau\in\Sigma_h, K\in\mathcal{T}_{h}.
\end{equation}

\subsection{Superconvergence analysis}

Define two functions
$$E(\xi):=\frac{1}{2}(\xi^2-1),\quad F(\eta):=\frac{1}{2}(\eta^2-1).$$ Obviously it holds
\begin{align}\label{E-F}
E^{'}(\xi)=\xi,\ \ E^{''}(\xi)=1, \ \ F^{'}(\eta)=\eta,\ \  F^{''}(\eta)=1.
\end{align}

%

\begin{lemma}\label{lem:gxiv_estimate}
Under \textbf{Diagonal condition} (MC1) and \textbf{Neighboring condition} (MC2),  for any $g\in H^2(\Omega)$ and $v\in S_h$ it hold
\begin{eqnarray}
\sum\limits_{K\in\mathcal T_h}h_K\int_{\hat K} g\xi\frac{\partial^2v}{\partial\xi\partial\eta} & \lesssim & h(h^{\alpha} |g|_1 + h|g|_2)|v|_1,\label{est1}\\
\sum\limits_{K\in\mathcal T_h}h_K\int_{\hat K} g\eta\frac{\partial^2v}{\partial\xi\partial\eta} & \lesssim & h(h^{\alpha} |g|_1 + h|g|_2)|v|_1.\label{est2}
\end{eqnarray}
\end{lemma}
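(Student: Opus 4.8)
The plan is to integrate by parts twice in the reference variables, exploiting that $\hat v:=v|_K\circ F_K$ is bilinear, so $\frac{\partial^2\hat v}{\partial\xi\partial\eta}$ is a constant on each $\hat K$ while $\frac{\partial\hat v}{\partial\xi}$ is constant along the edges $\eta=\pm1$. It suffices to treat \eqref{est1}; \eqref{est2} follows by interchanging the roles of $\xi,\eta$ (and of $E,F$). Using $\xi=E'(\xi)$ and $E(\pm1)=0$, a first integration by parts in $\xi$ gives $\int_{\hat K}\hat g\,\xi\,\frac{\partial^2\hat v}{\partial\xi\partial\eta}=-\int_{\hat K}\frac{\partial\hat g}{\partial\xi}E(\xi)\frac{\partial^2\hat v}{\partial\xi\partial\eta}$ with no boundary term. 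Since $\frac{\partial^2\hat v}{\partial\xi\partial\eta}$ is constant and $E$ does not depend on $\eta$, one has $\frac{\partial^2\hat v}{\partial\xi\partial\eta}E(\xi)=\frac{\partial}{\partial\eta}\bigl(\frac{\partial\hat v}{\partial\xi}E(\xi)\bigr)$; integrating by parts in $\eta$ then yields
\begin{equation*}
\int_{\hat K}\hat g\,\xi\,\frac{\partial^2\hat v}{\partial\xi\partial\eta}
=\int_{\hat K}\frac{\partial^2\hat g}{\partial\xi\partial\eta}\,\frac{\partial\hat v}{\partial\xi}\,E(\xi)
-\int_{-1}^{1}\Bigl[\frac{\partial\hat g}{\partial\xi}\,\frac{\partial\hat v}{\partial\xi}\,E(\xi)\Bigr]_{\eta=-1}^{\eta=1}\dxi .
\end{equation*}

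The interior term is harmless: since $|E|\le\tfrac12$, Lemma \ref{lem:dv_d_xi} gives $\bigl|\int_{\hat K}\frac{\partial^2\hat g}{\partial\xi\partial\eta}\frac{\partial\hat v}{\partial\xi}E\bigr|\lesssim\|\frac{\partial^2\hat g}{\partial\xi\partial\eta}\|_{0,\hat K}\|\frac{\partial\hat v}{\partial\xi}\|_{0,\hat K}\lesssim(h_K^{\alpha}|g|_{1,K}+h_K|g|_{2,K})|v|_{1,K}$; multiplying by $h_K$, summing over $K$ and using Cauchy--Schwarz produces exactly $h(h^{\alpha}|g|_1+h|g|_2)|v|_1$.

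The boundary term is the heart of the matter. Along $\eta=1$ the constant $\frac{\partial\hat v}{\partial\xi}$ equals $\tfrac12(v(Z_3^K)-v(Z_4^K))$, and along $\eta=-1$ it equals $\tfrac12(v(Z_2^K)-v(Z_1^K))$. Pulling these constants out and integrating by parts once more in $\xi$ (again $E(\pm1)=0$) turns each edge integral into $\int_{-1}^{1}\hat g(\xi,\pm1)\,\xi\,\dxi$, so the boundary contribution of $\int_{\hat K}\hat g\xi\frac{\partial^2\hat v}{\partial\xi\partial\eta}$ equals $\tfrac12[v]_{e_K^{+}}\langle g\rangle_{e_K^{+}}-\tfrac12[v]_{e_K^{-}}\langle g\rangle_{e_K^{-}}$, where $e_K^{\pm}$ are the edges $\eta=\pm1$ of $K$, $[v]_e$ is the difference of the nodal values of $v$ at the endpoints of $e$, and $\langle g\rangle_e:=\int_{-1}^{1}\hat g\,\xi\,\dxi$ along $e$; each product $[v]_e\langle g\rangle_e$ is independent of the orientation chosen for $e$. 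I would then regroup $\sum_Kh_K\bigl(\tfrac12[v]_{e_K^{+}}\langle g\rangle_{e_K^{+}}-\tfrac12[v]_{e_K^{-}}\langle g\rangle_{e_K^{-}}\bigr)$ edge by edge: edges on $\partial\Omega$ drop out because $v|_{\Gamma}=0$ makes $[v]_e=0$, and for an interior edge $e=\partial K_1\cap\partial K_2$ shape regularity together with \textbf{Neighboring condition} (MC2) forces $e$ to be an $\eta$-edge of \emph{both} elements and, precisely, to be $e^{+}$ for one of them and $e^{-}$ for the other (any other configuration would make $(a_1^{K_1},b_1^{K_1})$ and $(a_1^{K_2},b_1^{K_2})$ nearly antiparallel, or force one of them to be nearly parallel to a transversal edge vector of its own element, both incompatible with (MC2) and shape regularity for small $h$). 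Hence the two contributions of $e$ collapse to $\tfrac12[v]_e\langle g\rangle_e(h_{K_1}-h_{K_2})$, with $|h_{K_1}-h_{K_2}|\lesssim h^{1+\alpha}$ by (MC2). It then remains to bound $|[v]_e|\lesssim\|\partial_{\xi_e}\hat v\|_{L^2(e)}$ and $|\langle g\rangle_e|\lesssim\|\hat g-\bar g\|_{L^2(e)}\lesssim\|\partial_{\xi_e}\hat g\|_{L^2(e)}$ (one–dimensional Poincar\'e on $e$, using $\int_{-1}^{1}\xi\,\dxi=0$), to sum over edges by Cauchy--Schwarz, and to use the elementwise trace bounds $\sum_{e\subset\partial K}\|\partial_{\xi_e}\hat v\|_{L^2(e)}^2\lesssim|v|_{1,K}^2$ and $\sum_{e\subset\partial K}\|\partial_{\xi_e}\hat g\|_{L^2(e)}^2\lesssim|g|_{1,K}^2+h_K|g|_{1,K}|g|_{2,K}$ (the latter from a trace inequality on $\hat K$ and Lemma \ref{lem:dv_d_xi}). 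This bounds the boundary part by $h^{1+\alpha}|g|_1|v|_1$, and combined with the interior estimate gives \eqref{est1}.

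The step I expect to be the main obstacle is precisely this edge bookkeeping: recognizing the boundary term after the second integration by parts as a genuine "nodal difference times weighted edge moment", and then showing via (MC2) and shape regularity that neighbouring elements see a shared edge with opposite $\eta$-orientations, so that only the near-cancellation $|h_{K_1}-h_{K_2}|\lesssim h^{1+\alpha}$ survives. Without exploiting this cancellation one only obtains the non-sharp bound $h|g|_1|v|_1$.
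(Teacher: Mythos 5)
Your proof is correct and follows essentially the same route as the paper's: integrate by parts in $\xi$ using $\xi=E'(\xi)$, then in $\eta$, bound the interior term $\int_{\hat K}\partial^2_{\xi\eta}\hat g\,E(\xi)\,\partial_\xi\hat v$ via Lemma \ref{lem:dv_d_xi}, and handle the edge terms by pairing each interior edge with its two neighbouring elements so that only the factor $|h_{K_1}-h_{K_2}|=O(h^{1+\alpha})$ from (MC2) survives, while boundary edges vanish by $v|_\Gamma=0$. Your only genuine additions are cosmetic (a third integration by parts turning the edge integrals into nodal jumps times weighted edge moments) and a welcome explicit justification, via (MC2) and shape regularity, that a shared edge is an $\eta$-edge of both neighbours with opposite orientations — a point the paper's proof uses tacitly.
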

\begin{proof}
We only give the proof of the first inequality, since the proof of the second one is similar. For any $K\in\mathcal T_h$,   $g\in H^2(\Omega)$ and $v\in S_h$, by \eqref{E-F}, integration by parts, Cauchy-Schwardz inequality and Lemma \ref{lem:dv_d_xi}, we have
\begin{eqnarray}\label{equality1}
h_K\int_{\hat K} \hat g\xi\frac{\partial^2\hat v}{\partial\xi\partial\eta} & = & -h_K\int_{\hat K}\frac{\partial \hat g}{\partial\xi}E(\xi)\frac{\partial^2\hat v}{\partial\xi\partial\eta}  \nonumber\\
&=&  h_K\int_{-1}^1 \left( \frac{\partial \hat g}{\partial\xi}E(\xi)\frac{\partial \hat v}{\partial\xi} \right)(\xi,-1)\dxi   -h_K \int_{-1}^1 \left( \frac{\partial \hat g}{\partial\xi}E(\xi)\frac{\partial \hat v}{\partial\xi} \right)(\xi,1)\dxi  \nonumber \\
& &\quad + h_K\int_{\hat K}\frac{\partial^2 \hat g}{\partial\xi\partial\eta}E(\xi)\frac{\partial \hat v}{\partial \xi} \nonumber\\
& = & \frac{h_K}{2}|l_l|\int_{l_l} E(\xi(s))\frac{\partial g}{\partial s}\frac{\partial v}{\partial s}\text{d} s - \frac{h_K}{2}|l_u|\int_{l_u} E(\xi(s))\frac{\partial g}{\partial s}\frac{\partial v}{\partial s}\text{d} s \nonumber\\
& & + \left( \mathcal O(h_K^{1+\alpha})  |g|_{1,K} +   \mathcal O(h_K^2)|g|_{2,K}    \right)|v|_{1,K},
\end{eqnarray}
where $l_u$ and $l_l$ are the upper and lower edges of $K$ (see Figure \ref{Fig:bilinear}). If the edge $l_u\subset \partial\Omega$, then the second term of the last equality in \eqref{equality1} vanishes due to the homogeneous Dirichlet boundary condition, i.e. $v|_{\partial \Omega}=0$.  If $l_u$ is an interior edge of the partition  $\mathcal T_h$,  we assume   $l_u$ is  shared by two elements, $K$ and $K_*$, of $ \mathcal T_h$.  By \textbf{Neighboring condition} (MC2) we have
$$
|h_{K} - h_{K_*}| = \mathcal O(h^{1+\alpha}),
$$
then,  from  trace inequality and inverse inequality, it follows
\begin{eqnarray}\label{h*}
& & |h_{K} - h_{K_*}||l_u|\int_{l_u}E(\xi(s)) \frac{\partial g}{\partial s}\frac{\partial v}{\partial s}\text{d} s\nonumber \\
& \lesssim & h^{1+\alpha} (|g|_{1,K} + h_K|g|_{2,K})|v|_{1,K}.
\end{eqnarray}
The above arguments also apply to the edge $l_l$. As a result, a combination of \eqref{equality1}-\eqref{h*} yields the desired estimate \eqref{est1}.

 Similarly we can obtain \eqref{est2}.
\end{proof}



 \begin{lemma} \label{lem:super_a_b}
 Under \textbf{Diagonal condition} (MC1) and \textbf{Neighboring condition} (MC2), for  $\mathbf\sigma\in H^{2}(\Omega, \mathbb{R}_{sym}^{2\times 2})\cap \Sigma$ and $\mathbf u\in (H^{3}(\Omega))^2\cap \mathbf V$ it holds
 \begin{eqnarray}
\label{eq:a_super} a(\mathbf\sigma^{I}-\mathbf\sigma,\mathbf\tau) & = & 0,\qquad\qquad\text{for all }\mathbf \tau\in\Sigma_h,\\
\label{eq:b_sigma_super} b(\mathbf\sigma-\mathbf\sigma^{I},\mathbf v) & \lesssim & \left(h^{1+\alpha}||\mathbf\sigma||_{1}+h^2|\mathbf\sigma |_{2}\right)|\mathbf v|_{1},\quad \text{for all }\mathbf v\in \mathbf V_h,\\
\label{eq:b_u_super} b(\mathbf\tau,\mathbf u-\mathbf u^{I}) & \lesssim & h^{1+\alpha}||\mathbf u||_{3} \|\mathbf\tau\|,\quad\text{for all }\mathbf \tau\in \Sigma_h.
 \end{eqnarray}
 \end{lemma}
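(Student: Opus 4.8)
The first identity \eqref{eq:a_super} is immediate from the definition of $\mathbf\sigma^I$ as the $a(\cdot,\cdot)$-projection in \eqref{eq:interpolation}; there is nothing to do. The remaining two estimates are the real content, and the plan is to pull everything back to the reference element $\hat K$ via $F_K$, Taylor-expand the integrand around the element center, and use the moment/orthogonality properties \eqref{lem:L2orth}, \eqref{sigma-inter2} together with the bubble relation \eqref{PS-relation} to gain the extra power $h^\alpha$ (or $h$) beyond the crude bound \eqref{a priori est}.

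For \eqref{eq:b_sigma_super}, I would write $b(\mathbf\sigma-\mathbf\sigma^I,\mathbf v)=-\sum_K\int_K(\mathbf\sigma-\mathbf\sigma^I):\mathbf\epsilon(\mathbf v)$ and first replace $\mathbf\epsilon(\mathbf v)$ by the modified strain $\tilde{\mathbf\epsilon}(\mathbf v)$, the error being controlled by Lemma \ref{lem:epsilon_tilde} combined with \eqref{sigma-inter}, which costs $h^\alpha\cdot h\|\mathbf\sigma\|_1|\mathbf v|_1$ — acceptable. Then $\int_K(\mathbf\sigma-\mathbf\sigma^I):\tilde{\mathbf\epsilon}(\mathbf v)$ is transformed to $\hat K$, where $\tilde{\mathbf\epsilon}(\mathbf v)$ has entries that are affine combinations of $\partial\hat v/\partial\xi$ and $\partial\hat v/\partial\eta$ with \emph{constant} coefficients $a_i,b_i$; writing $\partial\hat v/\partial\xi = \overline{\partial_\xi\hat v} + \xi\,\partial^2_\xi\hat v + \ldots$ (exploiting that $\hat v$ is bilinear, so $\partial\hat v/\partial\xi$ is affine in $\eta$ only and $\partial^2_{\xi\eta}\hat v$ is constant), the constant part of $\tilde{\mathbf\epsilon}(\mathbf v)$ is killed by the moment condition \eqref{lem:L2orth}/\eqref{sigma-inter2} after subtracting $Q_K\mathbf\sigma$, and the remaining piece is precisely of the form $\int_{\hat K} \hat g\,\xi\,\partial^2_{\xi\eta}\hat v$ and $\int_{\hat K}\hat g\,\eta\,\partial^2_{\xi\eta}\hat v$ handled by Lemma \ref{lem:gxiv_estimate} (with $g$ a component of $\mathbf\sigma$, so $|g|_1\lesssim\|\mathbf\sigma\|_1$, $|g|_2\lesssim|\mathbf\sigma|_2$), yielding exactly $h(h^\alpha\|\mathbf\sigma\|_1+h|\mathbf\sigma|_2)|\mathbf v|_1$. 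One has to be a little careful that after subtracting $Q_K\mathbf\sigma$ the $h.o.t.$ terms in \eqref{aa} are multiplied by $\|\mathbf\sigma-Q_K\mathbf\sigma\|_{0,K}\lesssim h_K|\mathbf\sigma|_{1,K}$, again within budget.

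For \eqref{eq:b_u_super}, I would write $b(\mathbf\tau,\mathbf u-\mathbf u^I)=-\sum_K\int_K\mathbf\tau:\mathbf\epsilon(\mathbf u-\mathbf u^I)$. Since $\mathbf u-\mathbf u^I$ vanishes at the four vertices, the key is again to pass to $\tilde{\mathbf\epsilon}$ and then to $\hat K$: on $\hat K$, $\widehat{\mathbf u^I}$ is the bilinear interpolant of $\hat{\mathbf u}$, so $\partial_\xi(\hat u_i-\hat u_i^I)$, $\partial_\eta(\hat u_i - \hat u_i^I)$ have vanishing mean and vanishing appropriate moments, and one integrates by parts moving a $\xi$- or $\eta$-derivative onto the (constant or linear) components of $\hat{\mathbf\tau}$; the PS stress mode is constant plus linear, so after integration by parts the boundary terms and the interior terms both involve second derivatives of $\hat{\mathbf u}$ times the $\hat K$-diameter, and pulling back gives a factor $h_K\cdot|\mathbf u|_{2}$, while the extra $h^\alpha$ comes from the fact that the linear part of $\hat{\mathbf\tau}$ enters only through the coefficients $\eta, \xi$ whose contribution, by \eqref{PS-relation} applied to a bubble correction of $\hat{\mathbf u}$, carries the factor $d_K/h_K = O(h_K^\alpha)$; alternatively one decomposes $\mathbf u^I$ itself as bilinear-interpolant-minus-bubble and uses \eqref{PS-relation} directly. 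The main obstacle I anticipate is bookkeeping the $h^\alpha$ gain cleanly in this last estimate: one must make sure the "constant-coefficient" structure of $\tilde{\mathbf\epsilon}$ versus the true $\mathbf\epsilon$, the vertex-interpolation vanishing, and the PS orthogonality \eqref{PS-relation} are combined so that \emph{every} surviving term carries either $h^{1+\alpha}$ outright or $h^2|\mathbf u|_3$ that gets absorbed using $h\lesssim h^\alpha\cdot(\text{stuff})$ — here it is $h^3|\mathbf u|_3$ that must be bounded by $h^{1+\alpha}\|\mathbf u\|_3$, which is fine for $h$ small since $\alpha\le 2$ is the relevant range, but requires noting $h^{2}\lesssim h^{\alpha}$ is \emph{false} in general, so the $|\mathbf u|_3$ term must genuinely come with $h^{1+\alpha}$ from the neighboring condition, exactly as in Lemma \ref{lem:gxiv_estimate}, rather than from a naive Taylor remainder.
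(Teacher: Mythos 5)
Your treatment of \eqref{eq:a_super} and \eqref{eq:b_sigma_super} is essentially the paper's argument: the splitting into (i) the constant part $J_K\tilde{\mathbf\epsilon}(\mathbf v_1)$ paired with $\int_{\hat K}(\hat{\mathbf\sigma}-\hat{\mathbf\sigma^I})$ and estimated by \eqref{sigma-inter2}, (ii) the $\mathbf\epsilon-\tilde{\mathbf\epsilon}$ correction controlled by Lemma \ref{lem:epsilon_tilde} together with \eqref{sigma-inter}, and (iii) the $\xi\eta$-part reduced to $\int_{\hat K}\hat g\,\xi\,\partial^2_{\xi\eta}\hat v$ and handled by Lemma \ref{lem:gxiv_estimate}, is exactly the decomposition $I_1+I_2+I_3$ used in the paper (there realized by writing $\mathbf v=\mathbf v_1+\mathbf v_2$ with $\hat{\mathbf v}_1\in\mathrm{span}\{1,\xi,\eta\}^2$, $\hat{\mathbf v}_2\in\mathrm{span}\{\xi\eta\}^2$). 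One small imprecision: the constant part is not "killed" by \eqref{lem:L2orth}; because $J_K$ is non-constant, \eqref{lem:L2orth} only yields $\bigl|\int_{\hat K}(\hat{\mathbf\sigma}-\hat{\mathbf\sigma^I})\bigr|\lesssim h_K^\alpha\|\mathbf\sigma\|_{1,K}$, which is precisely \eqref{sigma-inter2} — you cite the right inequality, so this is cosmetic.

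The genuine gap is in \eqref{eq:b_u_super}, where your account of the mechanism producing $h^{1+\alpha}$ is not correct as stated. The paper's argument is purely element-local and uses neither \textbf{(MC2)} nor any summation by parts across element interfaces: one writes $\int_K\mathbf\epsilon(\mathbf u-\mathbf u^I)\cdot\mathbf\tau=\int_{\hat K}\bigl(X_0\hat\nabla(\hat{\mathbf u}-\hat{\mathbf u}^I)+X_1\hat\nabla(\hat{\mathbf u}-\hat{\mathbf u}^I)\bigr)\cdot\hat{\mathbf\tau}$, where $X_0$ has constant entries $a_i,b_i$ (this is the $\tilde{\mathbf\epsilon}$ part) and $X_1$ has entries $a_{12}\xi,\,b_{12}\eta,\dots$ of size $O(h_K^{1+\alpha})$ by \textbf{(MC1)}. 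The $h^{1+\alpha}$ gain comes \emph{entirely} from $X_1$, i.e. from $d_K=O(h_K^{1+\alpha})$, applied to $|\hat{\mathbf u}-\hat{\mathbf u}^I|_{1,\hat K}$. The relation \eqref{PS-relation} is an \emph{exact} orthogonality, $\int_K\tilde{\mathbf\epsilon}(\mathbf u^b)\cdot\mathbf\tau=0$; it contributes no $O(h^\alpha)$ factor whatsoever. Its role is only to let you replace $\hat{\mathbf u}^I$ by $\hat{\mathbf u}^I+\hat{\mathbf u}^b$ (the local quadratic interpolant) for free inside the $X_0$ term, upgrading that term from the bilinear interpolation error $O(h|\mathbf u|_2)$ to the quadratic one $O(h^2\|\mathbf u\|_3)$ — for \emph{both} the constant and the linear components of $\hat{\mathbf\tau}$, not just the linear ones. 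Your sentence attributing the $h^\alpha$ to "the linear part of $\hat{\mathbf\tau}$ ... by \eqref{PS-relation}" inverts these roles, and your closing claim that the $\|\mathbf u\|_3$ term "must genuinely come with $h^{1+\alpha}$ from the neighboring condition, exactly as in Lemma \ref{lem:gxiv_estimate}" is a misdiagnosis: in the correct proof the $\|\mathbf u\|_3$ term carries $h^2$ (from the quadratic interpolation error) and the $h^{1+\alpha}$ term carries only $\|\mathbf u\|_2$, which is consistent with the stated bound once one reads $\alpha$ as $\min\{\alpha,1\}$ as in the rest of the paper. Since you yourself flag that you cannot close the bookkeeping here, this part of the proposal does not yet constitute a proof.
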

 \begin{proof}
The relation \eqref{eq:a_super} follows from  \eqref{eq:interpolation}, i.e.  the definition of $\sigma^I$.

Now we prove the estimate \eqref{eq:b_sigma_super}.  For any $\mathbf v\in \mathbf V_h$, we  decompose it as  $\mathbf v = \mathbf v_1 + \mathbf v_2$ with
$\mathbf {\hat v}_1=\mathbf v_1|_K \circ F_K\in\text{span}\{1,\ \xi,\ \eta\}^2$, $\mathbf {\hat v}_2=\mathbf v_2|_K\circ F_K\in\text{span}\{\xi\eta\}^2$, then it holds
\begin{eqnarray}\label{b()}
b(\mathbf v,\mathbf\sigma-\mathbf\sigma^{I}) & = & \sum\limits_{K\in\mathcal T_h}\int_{K}\mathbf\epsilon(\mathbf v)\cdot(\mathbf\sigma-\mathbf\sigma^{I}) \nonumber\\
& = & \sum\limits_{K\in\mathcal T_h}\int_{K}(\mathbf\sigma-\mathbf\sigma^{I})\cdot\tilde{\mathbf\epsilon}(\mathbf v) + \int_K(\mathbf\sigma-\mathbf\sigma^{I})\cdot\left(\mathbf\epsilon(\mathbf v) - \tilde{\mathbf\epsilon}(\mathbf v)\right) \nonumber\\
&= &\sum\limits_{K\in\mathcal T_h}\int_K(\mathbf\sigma-\mathbf\sigma^{I})\cdot \tilde{\mathbf\epsilon}(\mathbf v_1)  + \sum\limits_{K\in\mathcal T_h}\int_K(\mathbf\sigma^I - \mathbf\sigma)\cdot\left( \mathbf\epsilon(\mathbf v) - \tilde{\mathbf\epsilon}(\mathbf v) \right)\nonumber\\
& &+\sum\limits_{K\in\mathcal T_h}\left( \int_K\mathbf\sigma\cdot\tilde{\mathbf\epsilon}(\mathbf v_2)- \int_K\mathbf\sigma^{I}\cdot\tilde{\mathbf\epsilon}(\mathbf v_2)\right)\nonumber\\
&=: &  I_{1} + I_{2} + I_{3}.
\end{eqnarray}
We note that $J_K\tilde{\mathbf\epsilon}(\mathbf v_1)$ is a constant vector on $K$ by the definition \eqref{modified strain}. Thus, in view of Lemmas \ref{lem:dv_d_xi}-\ref{lem:L2app_sigmaI} we have
\begin{eqnarray}\label{I1}
|I_{1} |& = &|\sum\limits_{K\in\mathcal T_h} \int_{\hat K}(\hat{\mathbf\sigma} - \hat{\mathbf\sigma^I})\cdot J_K\tilde{\mathbf\epsilon}(\mathbf v_1) |= |\sum\limits_{K\in\mathcal T_h}J_K\tilde{\mathbf\epsilon}(\mathbf v_1)\cdot\int_{\hat K}(\hat{\mathbf\sigma} - \hat{\mathbf\sigma^I})|\nonumber\\
& \lesssim & \sum\limits_{K\in\mathcal T_h}h_K^{\alpha}\|\mathbf\sigma\|_{1,K}\|J_K\tilde{\mathbf\epsilon}(\mathbf v_1)\|_{0,\hat K} \nonumber\\
&  \lesssim &\sum\limits_{K\in\mathcal T_h}h_K^{1 + \alpha}\|\mathbf\sigma\|_{1,K}|\mathbf v_1|_{1,K}\nonumber\\
&\lesssim& h^{1 + \alpha}\|\mathbf\sigma\|_{1}|\mathbf v|_{1}.
\end{eqnarray}

For the term $I_{2}$, from Lemmas \ref{lem:L2app_sigmaI}-\ref{lem:epsilon_tilde} it follows
\begin{eqnarray}\label{I2}
I_{2} & \leq & \sum\limits_{K\in\mathcal T_h}\|\mathbf\sigma^I - \mathbf\sigma\|_{0,K}\|\mathbf\epsilon(\mathbf v) - \tilde{\mathbf\epsilon}(\mathbf v)\|_{0,K} \lesssim h^{1 + \alpha}\|\mathbf\sigma\|_{1}|\mathbf v|_{1}.
\end{eqnarray}

We turn to estimate $I_{3}$. Denote  $\hat{\mathbf v}_2 = \mathbf v_2|_K\circ F_K = :(u_0\xi\eta,v_0\xi\eta)^T$ and $\bs\hat{\sigma^I }= :A\mathbf\beta^I$. Then, by
\eqref{eq:sigma_I_K} and  \eqref{aa},
 we have
 $$
\mathbf\beta^I = H_K^{-1}\int_K A^T\mathbb C^{-1} \mathbf\sigma= \int_K\left(\frac{1}{4J_0}\left( \begin{array}{ccc} 1  &  0 & 0 \\ 0 & 1 & 0 \\ 0 & 0 & 1 \\ \\ d_{41}\eta     &    d_{42}\eta  & d_{43}\eta \\ \\
d_{51}\xi        &
d_{52}\xi       &
d_{53} \xi
\end{array}\right) +h.o.t.\right) \mathbf\sigma,
$$
which, together with   $u_0 = \frac{\partial^2 u}{\partial\xi\partial\eta },\  v_0 = \frac{\partial^2 v}{\partial\xi\partial\eta }$,   \textbf{Neighboring condition} (MC2), Lemma \ref{lem:gxiv_estimate}  and \eqref{hot}-\eqref{dij},  yields
\begin{eqnarray}\label{I11}
|\sum\limits_{K\in\mathcal T_h}\int_K\sigma^I\cdot\tilde{\mathbf \epsilon}(\mathbf v_2)|& = & |\sum\limits_{K\in\mathcal T_h}\left(\mathbf\beta^I\right)^T \int_{\hat K} A^T \left(J_K\tilde{\mathbf\epsilon}(\mathbf v_2)\right)|\nonumber\\
& =&|\sum\limits_{K\in\mathcal T_h}  \frac{4J_0}{3}\left(\mathbf\beta^I \right)^T \left( \begin{array}{c} 0 \\ 0 \\ 0 \\ \frac{u_0+\frac{b_1}{a_1}v_0 }{a_1} \\  \frac{\frac{a_2}{b_2}u_0 + v_0}{b_2}  \end{array} \right)|\nonumber\\
  &\lesssim& \left(h^{1+\alpha}|\mathbf\sigma|_{1}+h^2|\mathbf\sigma |_{2}\right)|\mathbf v|_{1}.
\end{eqnarray}
Similarly, since
\begin{eqnarray*}
\int_K\mathbf\sigma\cdot\tilde{\mathbf\epsilon}(\mathbf v_2) & = & \int_{\hat K}\bs{\hat\sigma}\cdot \left( J_K\tilde{\mathbf\epsilon}(\hat{\mathbf v}_2) \right)  =
\int_{\hat K}\hat{\mathbf\sigma}\cdot \left( \begin{array}{c} u_0b_2\eta - u_0b_1\xi \\ -v_0a_2\eta + v_0a_1\xi \\ (v_0b_2 - u_0a_2)\eta + (u_0a_1 - v_0b_1)\xi \end{array}  \right),
\end{eqnarray*}
it follows
$$|\sum\limits_{K\in\mathcal T_h}\int_K\mathbf\sigma\cdot\tilde{\mathbf\epsilon}(\mathbf v_2)| \lesssim \left(h^{1+\alpha}|\mathbf\sigma|_{1}+h^2|\mathbf\sigma |_{2}\right)|\mathbf v|_{1},
$$
which, together with \eqref{I11}, yields
\begin{equation}\label{I3}
|I_3|\lesssim \left(h^{1+\alpha}|\mathbf\sigma|_{1}+h^2|\mathbf\sigma |_{2}\right)|\mathbf v|_{1}.
\end{equation}
As a result, the inequality \eqref{eq:b_sigma_super} follows from \eqref{b()}-\eqref{I2} and \eqref{I3}.

The thing left is to prove the estimate \eqref{eq:b_u_super}. Denote
$$
X_0 := \left( \begin{array}{cccc} b_2  & -b_1 & 0 & 0 \\ 0 & 0 & -a_2 & a_1 \\ -a_2 & a_1 & b_2 & b_1 \end{array} \right),\quad  X_1 := \left( \begin{array}{cccc}  b_{12}\xi  & -b_{12}\eta & 0 & 0 \\ 0 & 0 & -a_{12}\xi & a_{12}\eta \\ -a_{12}\xi & a_{12}\eta & b_{12}\xi & b_{12}\eta \end{array} \right),
$$
$$ \hat{\nabla}\hat{\mathbf u}:=\left(\frac{\partial \hat u}{\partial \xi},
 \frac{\partial \hat u}{\partial \eta},
 \frac{\partial \hat v}{\partial \xi},
 \frac{\partial \hat v}{\partial \eta}\right)^T \quad \text{ for } \mathbf u=(u,v),$$
and let $\mathbf u^b\in \mathbf V_h^b$ be   such that $\hat{\mathbf u}^I + \hat{\mathbf u}^b$ is the piecewise   quadratic  interpolation  of $\hat{\mathbf u}$ in the local coordinates $\xi,\eta$. Thanks to the relation \eqref{PS-relation} and the interpolation theory by \cite{Arnold-Boffi-Falk-2002} ,  for $\mathbf \tau\in \Sigma_h$ it holds
\begin{eqnarray*}
\int_{K}\mathbf\epsilon(\mathbf u-\mathbf u^{I})\cdot\mathbf\tau  & = & \int_{\hat{K}}\left( X_0\hat{\nabla}(\hat{\mathbf u} - \hat{\mathbf u}^I - \hat{\mathbf u}^b) + X_1\hat{\nabla}(\hat{\mathbf u} - \hat{\mathbf u^I} )\right)\cdot\hat{\mathbf\tau}\dxi \deta \\
& \lesssim & \left(h_K|\hat{\mathbf u}-\hat{\mathbf u}^{I}-\hat{\mathbf u}^{b}|_{1,\hat{K}} + h_K^{1 + \alpha}|\hat{\mathbf u} - \hat{\mathbf u}^{I}|_{1,\hat{K}}\right)\|\hat{\mathbf \tau}\|_{0,\hat{K}} \\
& \lesssim & h^{1+\alpha}||\mathbf u||_{3,K}\|\mathbf \tau\|_{0,K}.
\end{eqnarray*}
Then the desired inequality \eqref{eq:b_u_super} follows.
\end{proof}

We are now in a position to state the following superconvergence results for the hybrid stress method \eqref{eq:HybridFEM}.

\begin{theorem}\label{them:supper} Let $(\mathbf\sigma,\mathbf u)\in H^{2}(\Omega, \mathbb{R}_{sym}^{2\times 2})\bigcap \Sigma \times \mathbf V\bigcap (H^{3}(\Omega))^2$ and $(\mathbf\sigma_{h},\mathbf u_{h})\in \Sigma_h\times \mathbf V_h$ be the solutions of the problems \eqref{eq:model-weak} and \eqref{eq:HybridFEM}, respectively, and let $\mathbf u^{I}\in \mathbf V_{h}$ be the isoparametric bilinear interpolation of $\mathbf u$ and $\mathbf\sigma^{I}\in \Sigma_{h}$ be the projection of $\mathbf\sigma$  defined in \eqref{eq:interpolation}. Then, under  \textbf{Diagonal condition} (MC1) and \textbf{Neighboring condition} (MC2),
 it holds
\begin{eqnarray}
||\mathbf\sigma_{h}-\mathbf\sigma^{I}|| & \lesssim & h^{1+\alpha}||\mathbf\sigma||_{1}+h^2|\mathbf\sigma|_{2},\\
|\mathbf u_{h}-\mathbf u^{I}|_{1}  & \lesssim & h^{1+\alpha}(||\mathbf u||_{3}+||\mathbf\sigma||_{1})+h^2|\mathbf\sigma|_{2}.
\end{eqnarray}
\end{theorem}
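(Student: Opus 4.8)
The plan is to derive the two superconvergence estimates from the error equations satisfied by $(\mathbf\sigma_h-\mathbf\sigma^I,\mathbf u_h-\mathbf u^I)$ together with the ``supercloseness'' bounds of Lemma \ref{lem:super_a_b}. First I would subtract the discrete system \eqref{eq:HybridFEM} from the continuous weak form \eqref{eq:model-weak} restricted to $\Sigma_h\times\mathbf V_h$ (which is legitimate since $\Sigma_h\subset\Sigma$ and $\mathbf V_h\subset\mathbf V$), obtaining the Galerkin orthogonality
\begin{align*}
a(\mathbf\sigma-\mathbf\sigma_h,\mathbf\tau)+b(\mathbf\tau,\mathbf u-\mathbf u_h)&=0,\qquad\text{for all }\mathbf\tau\in\Sigma_h,\\
b(\mathbf\sigma-\mathbf\sigma_h,\mathbf v)&=0,\qquad\text{for all }\mathbf v\in\mathbf V_h.
\end{align*}
Writing $\mathbf e_\sigma:=\mathbf\sigma_h-\mathbf\sigma^I\in\Sigma_h$ and $\mathbf e_u:=\mathbf u_h-\mathbf u^I\in\mathbf V_h$, and inserting $\mathbf\sigma^I$, $\mathbf u^I$, I would rewrite this as
\begin{align*}
a(\mathbf e_\sigma,\mathbf\tau)+b(\mathbf\tau,\mathbf e_u)&=a(\mathbf\sigma^I-\mathbf\sigma,\mathbf\tau)+b(\mathbf\tau,\mathbf u^I-\mathbf u),\\
b(\mathbf e_\sigma,\mathbf v)&=b(\mathbf\sigma^I-\mathbf\sigma,\mathbf v),
\end{align*}
for all $(\mathbf\tau,\mathbf v)\in\Sigma_h\times\mathbf V_h$. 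By \eqref{eq:a_super} the first term on the right of the first equation vanishes, so the right-hand sides are controlled entirely by \eqref{eq:b_u_super} and \eqref{eq:b_sigma_super}.

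Next I would invoke the stability of the discrete hybrid formulation. The bilinear form $a(\cdot,\cdot)$ is coercive on the kernel of $b(\cdot,\cdot)$ in $\Sigma_h$ uniformly in $\lambda$, and the pair $(\Sigma_h,\mathbf V_h)$ satisfies a uniform discrete inf-sup condition — these are exactly the ingredients behind the a priori bound \eqref{a priori est} from \cite{Yu-Xie-Carsten}. Taking $\mathbf\tau=\mathbf e_\sigma$ and $\mathbf v=\mathbf e_u$ and subtracting the two error equations, the $b(\mathbf e_\sigma,\mathbf e_u)$ terms cancel, giving $a(\mathbf e_\sigma,\mathbf e_\sigma)=b(\mathbf e_\sigma,\mathbf u^I-\mathbf u)-b(\mathbf e_\sigma,\mathbf\sigma^I-\mathbf\sigma)$... more precisely, after rearranging, $a(\mathbf e_\sigma,\mathbf e_\sigma) = b(\mathbf e_\sigma,\mathbf u^I-\mathbf u) - b(\mathbf\sigma^I-\mathbf\sigma,\mathbf e_u)$; but the second term is handled using $b(\mathbf e_\sigma,\mathbf v)=b(\mathbf\sigma^I-\mathbf\sigma,\mathbf v)$ with $\mathbf v=\mathbf e_u$, so in fact $a(\mathbf e_\sigma,\mathbf e_\sigma)=b(\mathbf e_\sigma,\mathbf u^I-\mathbf u)-b(\mathbf e_\sigma,\mathbf e_u)$ collapses cleanly. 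Using coercivity of $a$ on $\ker b|_{\Sigma_h}$ (noting $\mathbf e_\sigma$ is ``almost'' in that kernel, with defect measured by $b(\mathbf\sigma^I-\mathbf\sigma,\cdot)$), together with \eqref{eq:b_u_super} applied to $\mathbf\tau=\mathbf e_\sigma$ and \eqref{eq:b_sigma_super}, I expect to bound $\|\mathbf e_\sigma\|$ by $h^{1+\alpha}(\|\mathbf\sigma\|_1+\|\mathbf u\|_3)+h^2|\mathbf\sigma|_2$; actually the $\|\mathbf u\|_3$ contribution to $\|\mathbf e_\sigma\|$ should drop out because $b(\mathbf e_\sigma,\mathbf u^I-\mathbf u)$ can be re-expressed through the orthogonality structure, leaving only $h^{1+\alpha}\|\mathbf\sigma\|_1+h^2|\mathbf\sigma|_2$ as stated. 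Then the discrete inf-sup condition for $b(\cdot,\cdot)$ gives $|\mathbf e_u|_1\lesssim \sup_{\mathbf\tau\in\Sigma_h}\frac{b(\mathbf\tau,\mathbf e_u)}{\|\mathbf\tau\|}$, and from the first error equation $b(\mathbf\tau,\mathbf e_u)=-a(\mathbf e_\sigma,\mathbf\tau)+b(\mathbf\tau,\mathbf u^I-\mathbf u)$, whose right side is bounded by $(\|\mathbf e_\sigma\|+h^{1+\alpha}\|\mathbf u\|_3)\|\mathbf\tau\|$; combining with the bound on $\|\mathbf e_\sigma\|$ yields the second estimate.

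The main obstacle I anticipate is making the stability argument rigorous in a $\lambda$-robust way: the coercivity of $a$ holds only on the divergence-free-like kernel of $b$, while $\mathbf e_\sigma$ lies in $\Sigma_h$ only up to the perturbation $b(\mathbf\sigma^I-\mathbf\sigma,\mathbf v)$, so one must either split $\mathbf e_\sigma$ via the discrete Helmholtz-type decomposition associated with $(\Sigma_h,\mathbf V_h)$ or carefully track how the $\ker b$-coercivity degrades. I would lean on the uniform stability framework already established in \cite{Yu-Xie-Carsten} (the same one underpinning \eqref{a priori est}) and cite it rather than reprove it, so that the only genuinely new inputs are the supercloseness estimates \eqref{eq:a_super}--\eqref{eq:b_u_super}, which have already been proved. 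Everything else is a standard Strang-type manipulation, so the write-up should be short.
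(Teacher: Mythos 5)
Your overall strategy --- write the error equations, insert $\mathbf\sigma^I$ and $\mathbf u^I$, kill $a(\mathbf\sigma^I-\mathbf\sigma,\cdot)$ via \eqref{eq:a_super}, and control the remaining consistency terms by \eqref{eq:b_sigma_super}--\eqref{eq:b_u_super} plus the discrete stability of $(\Sigma_h,\mathbf V_h)$ cited from \cite{Yu-Xie-Carsten} --- is the same as the paper's, and you are right that Lemma \ref{lem:super_a_b} is the only genuinely new input. Where you diverge is in \emph{which} stability statement you invoke. The paper never touches kernel coercivity of $a$: it estimates the stress \emph{first}, using only the second error equation $b(\mathbf\sigma_h-\mathbf\sigma^I,\mathbf v)=b(\mathbf\sigma-\mathbf\sigma^I,\mathbf v)$ together with the bound $\|\boldsymbol\tau\|\lesssim\sup_{\mathbf v\in\mathbf V_h}b(\boldsymbol\tau,\mathbf v)/|\mathbf v|_1$ attributed to \cite{Yu-Xie-Carsten}, which gives $\|\mathbf\sigma_h-\mathbf\sigma^I\|\lesssim h^{1+\alpha}\|\mathbf\sigma\|_1+h^2|\mathbf\sigma|_2$ with no displacement norms at all; the displacement estimate then follows from the usual inf-sup bound $|\mathbf v|_1\lesssim\sup_{\boldsymbol\tau}b(\boldsymbol\tau,\mathbf v)/\|\boldsymbol\tau\|$, the first error equation, \eqref{eq:a_super}, \eqref{eq:b_u_super} and the $\lambda$-uniform boundedness of $a$. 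This decoupled order of operations is precisely what keeps $\|\mathbf u\|_3$ out of the first estimate.

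Your route has a concrete gap at exactly that point. The energy identity you actually obtain is (up to signs) $a(\mathbf e_\sigma,\mathbf e_\sigma)=b(\mathbf e_\sigma,\mathbf u-\mathbf u^I)-b(\mathbf\sigma-\mathbf\sigma^I,\mathbf e_u)$; your ``collapses cleanly'' step is garbled, since substituting the second error equation does not cancel anything but couples $\mathbf e_\sigma$ to $\mathbf e_u$. Any Brezzi-type treatment of this coupled system yields $\|\mathbf e_\sigma\|+|\mathbf e_u|_1\lesssim h^{1+\alpha}(\|\mathbf u\|_3+\|\mathbf\sigma\|_1)+h^2|\mathbf\sigma|_2$ --- the right order, but the claim that the $\|\mathbf u\|_3$ contribution to $\|\mathbf e_\sigma\|$ ``should drop out because of the orthogonality structure'' is asserted, not proved: the term $b(\mathbf e_\sigma,\mathbf u-\mathbf u^I)$ genuinely injects $\|\mathbf u\|_3$ into the stress bound, and I see no way to remove it within your argument. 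To get the theorem as stated, bound $\|\mathbf e_\sigma\|$ from the second error equation alone, as the paper does. Separately, note that uniform-in-$\lambda$ coercivity of $a$ on the discrete kernel is exactly the delicate issue for nearly incompressible elasticity; the paper's route sidesteps it entirely, so if you keep the Brezzi argument you must actually locate (or prove) that uniform coercivity in \cite{Yu-Xie-Carsten} rather than gesture at the framework behind \eqref{a priori est}.
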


\begin{proof} From \eqref{eq:model-weak} and \eqref{eq:HybridFEM} we easily obtain the error equations
\begin{eqnarray}
a(\mathbf\sigma-\mathbf\sigma_h,\mathbf\tau)  +  b(\mathbf\tau,\mathbf u-\mathbf u_h)&=&0\quad  \text{for all } \mathbf\tau\in\Sigma_h,\\
b(\mathbf\sigma-\mathbf\sigma_h,\mathbf v)  &=&0\quad  \text{for all } \mathbf v\in\mathbf V_h,
\end{eqnarray}
which, together with  the discrete inf-sup condition for $b(\cdot,\cdot)$ (cf. \cite{Yu-Xie-Carsten}), indicates
 \begin{align}|\mathbf u_{h}-\mathbf u^{I}|_{1}&\lesssim \sup_{\mathbf\tau\in \mathbf\Sigma_{h}}\frac{b(\mathbf\tau,\mathbf u_{h}-\mathbf u^{I})}{||\mathbf\tau||}=\sup_{\mathbf\tau \in \mathbf\Sigma_{h}}\frac{b(\mathbf\tau,\mathbf u_{h}-\mathbf u)+b(\mathbf\tau,\mathbf u-\mathbf u^{I})}{||\mathbf\tau||} \nonumber\\
 &=\sup_{\mathbf\tau\in \mathbf\Sigma_{h}}\frac{b(\mathbf\tau,\mathbf u-\mathbf u^{I})+a(\mathbf\sigma-\mathbf\sigma_{h},\mathbf\tau)}{||\mathbf\tau||}\nonumber\\
 &=\sup_{\mathbf\tau\in \mathbf\Sigma_{h}}\frac{b(\mathbf\tau,\mathbf u-\mathbf u^{I})+a(\mathbf\sigma-\mathbf\sigma^{I},\mathbf\tau)+a(\mathbf\sigma^{I}-\mathbf\sigma_{h},\mathbf\tau)}{||\mathbf\tau||}
 \end{align}
 and
\begin{align}
||\mathbf\sigma_{h}-\mathbf\sigma^{I}||\lesssim \sup_{\mathbf v\in \mathbf V_{h}}\frac{b(\mathbf\sigma_{h}-\mathbf\sigma^{I},\mathbf v)}{|\mathbf v|_{1}}=\sup_{\mathbf v\in \mathbf V_{h}}\frac{b(\mathbf\sigma-\mathbf\sigma^{I},\mathbf v)}{|\mathbf v|_{1}}.
\end{align}
Then the desired estimates follows from  the above two inequalities and Lemma \ref{lem:super_a_b}.
\end{proof}

\section{A posteriori error estimation of recovery type}

As shown in the estimate \eqref{a priori est}, the finite element solution $(\mathbf \sigma_h,\mathbf u_h)$ of PS hybrid stress method \eqref{eq:HybridFEM} is only of first order accuracy. We shall    show in Subsections \ref{sec5.1}-\ref{sec5.2}    that, by using the recovery techniques of \cite{NaGa-Zhang-2004,ZM-Z3,Shi-Xu-Zhang,Yan}, one can obtain recovered displacement gradients and  stresses of
improved accuracy, i.e.  $O(h^{1+\min\{\alpha,1\}})$.  Then, in Subsection \ref{sec5.3}, we  shall show the  asymptotical exactness of the  a posteriori error
estimators based on the recovered quantities.

\subsection{Gradient recovery by PPR}\label{sec5.1}
We follow the polynomial preserving recovery method (PPR) proposed in \cite{NaGa-Zhang-2004,ZM-Z3,Shi-Xu-Zhang} to construct the recovered displacement gradients
\begin{align}
G_{h}\mathbf u_{h}=(G_{h}u_{h}^{1},G_{h}u_{h}^{2})^{T}.
\end{align}
%
Here the gradient recovery operator $G_{h}:S_{h}\rightarrow S_{h}\times S_{h}$ is defined as follows \cite{ZM-Z3}: Given function $v_{h}\in S_h$, first define $G_{h}v_{h}$ at all nodes (vertices) of the partition $\mathcal{T}_{h}$, and then obtain $G_{h}v_{h}$ on the whole domain by interpolation using the original nodal shape functions of $S_{h}$.

In PPR  the values of $G_{h}v_{h}$ at all vertices of $\mathcal{T}_{h}$ are  determined   through the fitting method. In fact, let $Z_{i}(x_{i},y_{i})$ be any interior vertex of $\mathcal{T}_{h}$, and let $\omega_{i}$ be a   patch which consists of elements sharing the vertex $Z_{i}$, i.e.
\begin{equation}\label{patch-Zi}
{\omega}_{i}:=\bigcup\{K\in\mathcal{T}_{h}: Z_{i} \text{ is a vertex of  }K\}.
\end{equation}
For convenience  all nodes on $\bar{\omega}_{i}$ (including $Z_{i}$) are denoted by  $Z_{ij}, j=1,2,\cdots, n (n\geqslant 6).$
We use local coordinates $(\hat x,\hat y)$ with $Z_{i}$ as the origin, i.e.
$(\hat{x},\hat{y})=\frac{(x,y)-(x_{i},y_{i})}{h}$, where  $h:=h_{i}$ denotes the length of the longest element edge in the patch $\omega_{i}$. The fitting polynomial is
\begin{eqnarray}
p_{2}(x,y;Z_{i})&=&
\hat{\mathbf P}^{T}\hat{\mathbf  c}
\end{eqnarray}
with
$$
\hat{\mathbf P}=(1,\hat{x},\hat{y},\hat{x}^2,\hat{x}\hat{y},\hat{y}^2)^{T},
\quad \hat{\mathbf c}=(c_{1},hc_{2},hc_{3},h^2c_{4},h^2c_{5},h^2c_{6})^T.
$$
The coefficient vector $\hat{\mathbf c}$ is determined by the linear system
\begin{align}\label{eq: a}
Q^{T}Q\hat{\mathbf c}=Q^{T}\mathbf b_{h},
\end{align}
where
$\mathbf{b}_{h}=(v_{h}(Z_{i1}),v_{h}(Z_{i2}),\cdots,v_{h}(Z_{in}))^T$ and
$$Q=
\left(
\begin{array}{llllll} \displaystyle
1 &\hat{x}_1 &\hat{y}_{1} &\hat{x}_{1}^2 &\hat{x}_{1}\hat{y}_{1} &\hat{y}_{1}^2\\
1 &\hat{x}_2 &\hat{y}_{2} &\hat{x}_{2}^2 &\hat{x}_{2}\hat{y}_{2} &\hat{y}_{2}^2\\
\vdots&\vdots&\vdots&\vdots&\vdots&\vdots\\
1 &\hat{x}_n &\hat{y}_{n} &\hat{x}_{n}^2 &\hat{x}_{n}\hat{y}_{n} &\hat{y}_{n}^2\\
\end{array}\right).
$$
Finally, define
\begin{align}
G_{h}v_{h}(Z_{i}):=\nabla p_{2}(0,0;Z_{i}).
\end{align}

As shown in \cite{ZM-Z3,Shi-Xu-Zhang},  under  \textbf{Diagonal condition} (MC1) and \textbf{Neighboring condition} (MC2), the gradient recovery operator $G_{h}$ is a bounded linear operator on the isoparametric bilinear displacement  finite element space $\mathbf V_{h}=S_h\times S_h$ in the followng sense:
\begin{equation}\label{bounded}
||G_{h}\mathbf v||\lesssim |\mathbf v|_{1},\quad\forall \mathbf v\in \mathbf V_{h}.
\end{equation}

In view of Theorem \ref{them:supper},  we can obtain the superconvergence  of the recovered displacement gradients $G_{h}\mathbf u_h$ by following the same routine as in the proof of Theorem 4.2 in \cite{ZM-Z3}.


\begin{theorem} \label{them:G_h_estimate}
Let $\mathbf u\in  \mathbf V\bigcap (H^{3}(\Omega))^2$ and $ \mathbf u_{h}\in  \mathbf V_h$ be the displacement solutions of the problems \eqref{eq:model-weak} and \eqref{eq:HybridFEM}, respectively. Under \textbf{Diagonal condition} (MC1) and \textbf{Neighboring condition} (MC2), the  gradient recovery is superconvergent in the sense that
\begin{align}
||\nabla \mathbf u-G_{h}\mathbf u_{h}||\lesssim h^{1+\alpha}(||\mathbf u||_{3}+\|\mathbf\sigma\|_{1})+h^2\|\mathbf\sigma\|_{2}.
\end{align}
\end{theorem}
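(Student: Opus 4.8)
The plan is to combine the triangle inequality with the interpolation error for the bilinear element, the superconvergence estimate of Theorem~\ref{them:supper}, and the boundedness \eqref{bounded} of the recovery operator $G_h$. First I would write, for each component (so WLOG treating the scalar case and then summing),
\[
\|\nabla\mathbf u - G_h\mathbf u_h\| \le \|\nabla\mathbf u - G_h\mathbf u^{I}\| + \|G_h(\mathbf u^{I} - \mathbf u_h)\|.
\]
The second term is immediately controlled: by \eqref{bounded} we have $\|G_h(\mathbf u^{I}-\mathbf u_h)\| \lesssim |\mathbf u^{I}-\mathbf u_h|_1$, and Theorem~\ref{them:supper} bounds $|\mathbf u_h-\mathbf u^{I}|_1$ by $h^{1+\alpha}(\|\mathbf u\|_3+\|\mathbf\sigma\|_1)+h^2|\mathbf\sigma|_2$, which is already of the desired order.

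The remaining and genuinely substantive term is $\|\nabla\mathbf u - G_h\mathbf u^{I}\|$, the consistency error of the PPR operator applied to the \emph{interpolant} of the exact solution. Here I would invoke the polynomial-preserving property of PPR: on each patch $\omega_i$ the least-squares fit $p_2(\cdot;Z_i)$ reproduces any quadratic polynomial exactly, so $G_h$ reproduces the gradient of any quadratic. One splits $\mathbf u$ locally against its Taylor quadratic $\mathbf u_2$ about a representative point of the patch. On the one hand $G_h(\mathbf u^{I} - \mathbf u_2^{I}) - \nabla(\mathbf u - \mathbf u_2)$ is estimated using the boundedness \eqref{bounded} of $G_h$ together with standard interpolation and Bramble--Hilbert bounds for the $H^3$-remainder, contributing an $O(h^2\|\mathbf u\|_3)$ term after summing over patches (the scaling by $h$ in the local coordinates $(\hat x,\hat y)$ and the shape-regularity plus (MC1)--(MC2) keep the fitting matrix $Q^TQ$ uniformly well-conditioned). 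On the other hand, for the quadratic piece one uses $G_h\mathbf u_2^{I} = \nabla\mathbf u_2$ up to the discrepancy between $\mathbf u_2^{I}$ and $\mathbf u_2$ on the patch, which under (MC1)--(MC2) for isoparametric bilinear elements is where the $h^{1+\alpha}$ rather than $h^2$ enters — exactly as in Theorem~4.2 of \cite{ZM-Z3}. I would therefore simply assert that repeating that argument verbatim, but feeding in the superconvergence estimate of Theorem~\ref{them:supper} in place of the one used there for the Laplacian, yields
\[
\|\nabla\mathbf u - G_h\mathbf u^{I}\| \lesssim h^{1+\alpha}\|\mathbf u\|_3 + h^2\|\mathbf u\|_3.
\]

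Combining the two pieces gives the claimed bound
\[
\|\nabla\mathbf u - G_h\mathbf u_h\| \lesssim h^{1+\alpha}(\|\mathbf u\|_3 + \|\mathbf\sigma\|_1) + h^2\|\mathbf\sigma\|_2,
\]
where I have absorbed the $h^2\|\mathbf u\|_3$ term from the consistency estimate into the $h^{1+\alpha}\|\mathbf u\|_3$ term (since $h^2 \le h^{1+\alpha}$ when $\alpha\le 1$, and otherwise both are dominated by $h^2\|\mathbf u\|_3$, which can be included by writing $h^{1+\min\{\alpha,1\}}$ as in the abstract). The main obstacle is not any of the displacement-side estimates — those are borrowed wholesale from \cite{ZM-Z3,Shi-Xu-Zhang} — but rather making sure that the \emph{only} new input, namely replacing the conforming-element superconvergence of $|\mathbf u_h - \mathbf u^I|_1$ by the mixed/hybrid version from Theorem~\ref{them:supper}, is legitimately plugged into that machinery; this is clean because $G_h$ sees only the displacement finite element function $\mathbf u_h\in\mathbf V_h$ and nothing about how it was produced. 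Hence the proof is essentially a citation-plus-triangle-inequality argument, and I would keep it short.
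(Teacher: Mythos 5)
Your argument is exactly the one the paper intends: the paper gives no written proof beyond the remark that the result follows "by the same routine as Theorem~4.2 of \cite{ZM-Z3}" combined with Theorem~\ref{them:supper}, and that routine is precisely your triangle-inequality split $\|\nabla\mathbf u - G_h\mathbf u^I\| + \|G_h(\mathbf u^I-\mathbf u_h)\|$, with \eqref{bounded} and Theorem~\ref{them:supper} controlling the second term and the PPR consistency estimate controlling the first. Your proposal is correct and matches the paper's approach, including the correct observation that the only genuinely new input is substituting the hybrid-method superconvergence of $|\mathbf u_h-\mathbf u^I|_1$ into the existing PPR machinery.
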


\subsection{ Recovery of stresses}\label{sec5.2}

From the superconvergence of the recovered displacement gradients $G_{h}\mathbf u_{h}$ in Theorem \ref{them:G_h_estimate}, we can easily derive the following superconvergence of  the recovered stresses $G^{\bs\sigma}_{h}\bs\sigma_{h}=\frac{1}{2}\mathbb{C}\left(G_{h}\mathbf u_{h} + (G_{h}\mathbf u_{h})^T \right)$ for the stress tensor $\bs\sigma=\mathbb C \epsilon(\mathbf{u})$:
\begin{align}
||\bs\sigma-G^{\bs\sigma}_{h}\mathbf \sigma_{h}||\lesssim \|\mathbb C\| \left(h^{1+\alpha}(||\mathbf u||_{3}+\|\mathbf\sigma\|_{1})+h^2\|\mathbf\sigma\|_{2}\right).
\end{align}
However, due to the factor  $\|\mathbb C\|$ this estimate is not uniform with respect to the Lam\'e constant $\lambda$.

 In what follows we shall
    construct a uniform recovered-type stress approximation  by following   the idea of \cite{Yan}.

Denoting
\begin{align*}
M_{h}:=\{ v\in L^2(\Omega): \hat{v}=v|_{K}\circ F_{K} \in \text{ span}\{1,\xi,\eta,\xi\eta\} ,\ \text{for all}\ K\in \mathcal{T}_{h}\},
\end{align*}
we  introduce a recovered-type operator $$R_h: L^2(\Omega)\rightarrow M_{h}$$ as follows. For any $\psi\in L^2(\Omega)$, we first define $R_{h}\psi$ at all vertices of $\mathcal{T}_{h}$,   then obtain $R_{h}\psi\in M_{h} $ on the whole domain by interpolation using the nodal shape functions of the piecewise isoparametric bilinear interpolation.

For any  interior vertex   $Z_{i}(x_{i},y_{i})$ of $\mathcal{T}_{h}$, we assume its patch $\omega_i$, defined in \eqref{patch-Zi}, consists of $N$ elements, $K_{1},K_{2},\cdots, K_{N}$, with $ N\geq3$.  To define $R_{h}\psi$ at  $Z_i$
we introduce the space  $W:=span\{1, x,y\}$ and let $\phi_i^\psi\in W$
satisfy
\begin{align}\label{eq:stress_mim}
J(\phi_i^\psi)&=\min_{ w\in W}J(w),\quad J(w):=\sum_{j=1}^N\left(\int_{K_{j}}(w-\psi)\right)^2.
\end{align}
Assume $\phi^\psi=\alpha_1+\alpha_2x+\alpha_3y$ and denote
$$A_j:=\left( \int_{K_{j}} 1 ,\  \int_{K_{j}}x ,\ \int_{K_{j}}y\right), \quad A:=\left(A_1^T ,\  A_2^T ,\ \cdots,\ A_N^T\right)^T,$$
$$ \mathbf b:=\left( \int_{K_{1}}\psi,\   \int_{K_{2}}\psi,\ \cdots,\  \int_{K_{N}}\psi\right)^T,$$
then,  from \eqref{eq:stress_mim}, the constant vector $\alpha=(\alpha_1,\alpha_2,\alpha_3)^T$ is determined by
\begin{align}\label{eq:linear system}
A^{T}A\mathbf\alpha=A^{T}\mathbf b.
\end{align}
Thus it follows
\begin{equation}\label{phi}
 \phi_i^\psi=(1,x,y)\left(A^TA\right)^{-1}A^T\mathbf b.
 \end{equation}
We hence define
\begin{equation}\label{Rh}
R_{h}\psi(Z_0)=\phi^\psi(Z_0).
\end{equation}

We next define $R_{h}\psi$ at  any  vertex   $Z_b\in \partial\Omega$. Let $Z_b$ be shared by $m$ ($m\geq 1$)   patches, e.g. $\omega_1,\omega_2,\cdots, \omega_m$, which are corresponding $m$ interior vertices $Z_1,Z_2,\cdots,Z_m$, then we can define
\begin{equation}\label{Rhb}
R_{h}\psi(Z_b):=\frac{1}{m}\sum\limits_{i=1}^m \phi_i^\psi(Z_b),
\end{equation}
 where $  \phi_i^\psi $ is given by \eqref{phi}.

As a result,  for any given  stress finite element function $\mathbf\tau=(\tau_{11},\tau_{22},\tau_{12})^T\in \Sigma_h$, we  define the stress recovery
  $R_{h}\mathbf\tau\in M_{h}^3$ with
\begin{equation}\label{Rh-tau}
R_{h}\mathbf\tau:=(R_h\tau_{11},R_h\tau_{22},R_h\tau_{12})^T.
\end{equation}

%
%

\begin{remark}\label{5.2}
 We can show that  $A^{T}A$ in \eqref{eq:linear system} is invertible for  sufficiently small $h$. Since $N\geq3$, it suffices to show $rank(A)=3$. In fact,
in view of  \eqref{eq:trans} and \eqref{lem:J0_J1_J2-2} it holds
\begin{eqnarray*}
A_j&=&\left( |K_{j}| ,\  a_0^{K_j}|K_j|+\frac43(a_1^{K_j}J_1^{K_j}+a_2^{K_j}J_2^{K_j}) ,\  b_0^{K_j}|K_j|+\frac43(b_1^{K_j}J_1^{K_j}+b_2^{K_j}J_2^{K_j})\right)\\
&=&\left( |K_{j}| ,\  a_0^{K_j}|K_j| ,\  b_0^{K_j}|K_j|\right)+O(h_{K_j}^{2+\alpha}),
\end{eqnarray*}
which implies
\begin{equation}\label{A}
A=\left(\begin{array}{ccc}
   |K_{1}|&  a_0^{K_1}|K_1| &  b_0^{K_1}|K_1|\\
  |K_{2}|&  a_0^{K_2}|K_2| &  b_0^{K_2}|K_2|\\
  \vdots&\vdots&\vdots\\
    |K_{N}|&  a_0^{K_N}|K_N| &  b_0^{K_N}|K_N|
  \end{array}\right)+O(h^{2+\alpha}),
  \end{equation}
where   $|K_{j}|=O(h^2)$ is the area of  $K_{j}\subset w_{0}$. Recalling that  $Z_{0}$ is an interior vertex of $\mathcal{T}_{h}$  and $(a_0^{K_j},b_0^{K_j})$ is the center of the element $K_j$ ($1\leq j\leq N,  N\geq3$), we easily have the fact that there exist at least  three center points which are not lying on a same line. Thus, it holds $rank(A)=3$ for sufficiently small $h$.
\end{remark}



By the definition of $R_h$, we can   derive Lemmas \ref{lemma:boun_R_h}-\ref{lem:R_h}.

\begin{lemma}\label{lemma:boun_R_h}
The operator $R_h: L^2(\Omega)\rightarrow M_{h}$ is bounded in $L^2$ norm with
\begin{equation}\label{boun Rh}
\|R_h\psi\| \lesssim \|\psi\|,\quad \forall \psi\in L^2(\Omega).
\end{equation}
In addition, under  \textbf{Diagonal condition} (MC1) it holds
\begin{eqnarray}\label{est-Rh}
\|\psi - R_h\psi\|  \lesssim  h^{1 + \alpha}\|\psi\|_{1} + h^2 \|\psi\|_{2}, \quad\forall \psi\in H^2(\Omega).
\end{eqnarray}

\end{lemma}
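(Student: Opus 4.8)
The plan is to establish the two assertions separately, treating first the $L^2$-boundedness \eqref{boun Rh} and then the approximation estimate \eqref{est-Rh}. For the boundedness, the key observation is that $R_h\psi$ is the nodal interpolant (in $M_h$, using the isoparametric bilinear shape functions) of the vertex values $R_h\psi(Z_i) = \phi_i^\psi(Z_i)$, where $\phi_i^\psi\in W=\mathrm{span}\{1,x,y\}$ solves the least-squares problem \eqref{eq:stress_mim} on the patch $\omega_i$. Since the local interpolation operator associated with $M_h$ is stable with respect to nodal values (by shape regularity \eqref{partition condition} and the scaling estimates of Lemma \ref{lem:dv_d_xi}), it suffices to bound $|\phi_i^\psi(Z_i)|$ by $h^{-1}\|\psi\|_{0,\omega_i}$ up to a constant. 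From the explicit formula \eqref{phi}, $\phi_i^\psi(Z_i) = (1,x_i,y_i)(A^TA)^{-1}A^T\mathbf b$ with $\mathbf b_j=\int_{K_j}\psi$, so by Cauchy--Schwarz $|\mathbf b_j|\le |K_j|^{1/2}\|\psi\|_{0,K_j}\lesssim h\,\|\psi\|_{0,K_j}$, and one needs the uniform bound $\|(A^TA)^{-1}A^T\|\lesssim h^{-2}$ together with $|(1,x_i,y_i)|\lesssim 1$ after recentering. The main obstacle here is the uniform invertibility of $A^TA$ with a quantitative bound on its inverse: as noted in Remark \ref{5.2}, $A$ has the leading-order form \eqref{A} whose $3\times3$ Gram matrix (after dividing rows by $|K_j|=O(h^2)$ and recentering coordinates at $Z_i$, so that entries are $O(h)$) is uniformly bounded below by shape regularity, because at least three element centers in the patch are affinely independent with spread $\approx h$. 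Scaling back restores the factor $h^{-2}$ in $\|(A^TA)^{-1}A^T\|$, and summing over the finitely many patches (bounded overlap, again by shape regularity) yields \eqref{boun Rh}.

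For the approximation estimate \eqref{est-Rh}, the standard strategy is to exploit the polynomial-reproduction property of $R_h$: if $\psi$ is a polynomial of degree $\le 1$, then $\phi_i^\psi=\psi$ on each patch (the minimum of $J$ in \eqref{eq:stress_mim} is zero, attained at $\psi$ itself), hence $R_h\psi$ is the bilinear nodal interpolant of $\psi$; combined with a Bramble--Hilbert / averaged-Taylor argument this will give the $h^2\|\psi\|_2$ term. The extra $h^{1+\alpha}\|\psi\|_1$ term arises precisely because on a general quadrilateral the bilinear interpolant does not reproduce linears exactly in physical coordinates — the discrepancy is controlled by the diagonal condition (MC1), $d_K=O(h_K^{1+\alpha})$, and the $\xi\eta$-mode estimate in Lemma \ref{lem:dv_d_xi}. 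Concretely, I would write, on each patch $\omega_i$, $\psi-R_h\psi=(\psi-\pi_1\psi)-(R_h\psi-\pi_1 R_h\psi)-R_h(\psi-\pi_1\psi)$ where $\pi_1$ is a local $L^2$-projection onto linears (or an averaged Taylor polynomial), use $\|\psi-\pi_1\psi\|_{0,\omega_i}\lesssim h^2|\psi|_{2,\omega_i}$ and $|\psi-\pi_1\psi|_{1,\omega_i}\lesssim h|\psi|_{2,\omega_i}$, apply the just-proved $L^2$-stability to the last term, and estimate the middle term (the nodal-interpolation-minus-itself defect for a linear function, which is the bilinear-interpolation error of a linear, a pure distortion effect) by $h^{1+\alpha}|\psi|_{1,\omega_i}$ using $d_K=O(h_K^{1+\alpha})$ and \eqref{2-norm esti}. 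Squaring and summing over patches with finite overlap then gives \eqref{est-Rh}.

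The main difficulty I anticipate is the bookkeeping around the boundary-vertex definition \eqref{Rhb}, where $R_h\psi(Z_b)$ is an average of $\phi_i^\psi(Z_b)$ over the at most $m$ adjacent patches: one must check that extrapolating each linear $\phi_i^\psi$ from its interior patch out to the boundary vertex $Z_b$ does not lose stability or polynomial reproduction. Stability survives because $Z_b$ is within $O(h)$ of each $Z_i$ and the linears $\phi_i^\psi$ have gradients of size $O(h^{-1}\|\psi\|_{0,\omega_i})$, so $|\phi_i^\psi(Z_b)|\lesssim h^{-1}\|\psi\|_{0,\omega_i}$ as before; polynomial reproduction survives because each $\phi_i^\psi$ still equals a given global linear $\psi$ on its patch, hence equals it at $Z_b$, so the average does too. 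With these checks in place the interior estimates extend verbatim to boundary-adjacent elements, and the proof concludes by collecting the contributions. I would present the boundary case briefly as a remark within the proof rather than belaboring it, since it adds no essential mathematical content beyond the interior argument.
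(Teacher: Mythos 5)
Your proposal follows essentially the same route as the paper's (very terse) proof: the $L^2$-bound via the explicit formula \eqref{phi}, Cauchy--Schwarz on $\mathbf b$, a quantitative bound on $(A^TA)^{-1}$, and nodal-value stability of the interpolation; and the approximation bound via linear reproduction of $R_h$ plus Bramble--Hilbert under (MC1). Your recentering/rescaling discussion of why the Gram matrix is uniformly invertible usefully fills in what the paper relegates to Remark \ref{5.2}, and your treatment of the boundary vertices \eqref{Rhb} is correct and more careful than the paper, which omits it. Two points in the second half need fixing, though neither is fatal. First, the decomposition $\psi-R_h\psi=(\psi-\pi_1\psi)-(R_h\psi-\pi_1 R_h\psi)-R_h(\psi-\pi_1\psi)$ does not telescope to $\psi-R_h\psi$; the intended identity is $\psi-R_h\psi=(\psi-\pi_1\psi)-R_h(\psi-\pi_1\psi)+(\pi_1\psi-R_h\pi_1\psi)$. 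Second, and more substantively, the term $\pi_1\psi-R_h\pi_1\psi$ that you invoke to produce the $h^{1+\alpha}\|\psi\|_1$ contribution is exactly zero: since $x\circ F_K$ and $y\circ F_K$ are themselves bilinear in $(\xi,\eta)$, the isoparametric bilinear space $M_h|_K$ contains every linear polynomial of $(x,y)$, so nodal interpolation reproduces linears exactly on arbitrary convex quadrilaterals --- the ``distortion defect of the bilinear interpolant on a linear function'' does not exist. Your argument still delivers the stated bound (you are majorizing a vanishing term, and in fact it would give the stronger $h^2\|\psi\|_2$), but the mechanism you assign to the $h^{1+\alpha}$ term is not the right one; in the paper (MC1) enters through the uniform invertibility of $A^TA$ (Remark \ref{5.2}) and the $O(h^\alpha)$ Jacobian perturbations, not through a failure of linear reproduction.
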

\begin{proof} We first prove \eqref{boun Rh}. Let $\mathcal V$ be the set of all vertices of $\mathcal T_h$.  For $\psi\in L^2(\Omega)$ and $Z_i  (x_i,y_i)\in\mathcal V$, let $\phi^\psi_i\in W$ be the solution of the minimization problem \eqref{eq:stress_mim}. From  \eqref{Rh}-\eqref{Rhb}
we have
\begin{eqnarray}\label{rh}
\|R_h\psi\|^2 & \thickapprox & h^2\sum\limits_{Z_i\in \mathcal V}\phi^\psi_i(Z_i)^2. 
\end{eqnarray}
 Recalling that $||A||_\infty\lesssim h^2, ||(A^TA)^{-1}||_\infty\lesssim h^{-4}$ (cf. Remark \ref{5.2}) and $ \mathbf b=\left( \int_{K_{1}}\psi,\   \int_{K_{2}}\psi,\ \cdots,\  \int_{K_{N}}\psi\right)^T$,  from \eqref{phi}  we easily obtain
\begin{eqnarray*}
\phi_i^\psi(Z_i)^2 & = &  |(1,x_i,y_i)\left(A^TA\right)^{-1}A^T\mathbf b|^2\\
& \lesssim &h^{-2} \sum\limits_{j = 1}^{N}\|\psi\|_{0,K_j}^2,
\end{eqnarray*}
which, together with \eqref{rh}, leads to the desired conclusion.

  By noticing that the operator $R_h$ preserves linear polynomials on each patch $\omega_i$, namely $R_h\psi=\psi$ for $\psi\in W$, the desired estimate \eqref{est-Rh} follows from the Bramble-Hilbert lemma and \textbf{Diagonal condition} (MC1).
%
%
\end{proof}
\begin{lemma}\label{lem:R_h}
For $\mathbf\sigma\in \Sigma$,
let   $\mathbf\sigma^{I}\in \Sigma_{h}$ be defined as in \eqref{eq:interpolation}. Then it holds
\begin{equation}\label{eq:R_h}
R_h\mathbf\sigma = R_h\mathbf\sigma^I.
\end{equation}
\end{lemma}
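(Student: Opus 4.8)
The claim $R_h\boldsymbol\sigma = R_h\boldsymbol\sigma^I$ reduces, by the componentwise definition \eqref{Rh-tau} of the stress recovery and the construction of $R_h$ at every vertex via \eqref{Rh}, \eqref{Rhb}, to showing that for each interior vertex $Z_i$ and each scalar component, the local fitting polynomial $\phi_i^\psi\in W=\operatorname{span}\{1,x,y\}$ is unchanged when we replace $\psi$ by the corresponding component of $\boldsymbol\sigma^I$. Looking at \eqref{eq:linear system}--\eqref{phi}, the polynomial $\phi_i^\psi$ depends on $\psi$ only through the data vector $\mathbf b=\big(\int_{K_1}\psi,\dots,\int_{K_N}\psi\big)^T$ of element averages (up to the factor $|K_j|$) over the elements of the patch $\omega_i$. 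Hence the whole statement follows once I establish the single identity
\begin{equation*}
\int_{K}\boldsymbol\sigma = \int_{K}\boldsymbol\sigma^I \qquad \text{for every } K\in\mathcal T_h .
\end{equation*}

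But this is exactly the relation \eqref{lem:L2orth}, which was derived from \eqref{stress-ps} and the defining property \eqref{eq:interpolation} of $\boldsymbol\sigma^I$: since the PS stress mode contains the constant tensors (the first three columns of $A$), testing \eqref{eq:interpolation} against the constant test tensors on $K$ gives $\int_K(\boldsymbol\sigma-\boldsymbol\sigma^I)=\mathbf 0$. So the plan is: (i) quote \eqref{lem:L2orth} to get $\int_{K_j}\boldsymbol\sigma=\int_{K_j}\boldsymbol\sigma^I$ for every element $K_j$ in every patch; (ii) conclude that the data vector $\mathbf b$ in the normal equations \eqref{eq:linear system} is identical for $\psi$ a component of $\boldsymbol\sigma$ and for the same component of $\boldsymbol\sigma^I$, hence $\mathbf\alpha$ and therefore $\phi_i^\psi$ coincide via \eqref{phi}; (iii) since $R_h$ is built purely from the values $\phi_i^\psi(Z_i)$ at interior vertices — and at boundary vertices from averages \eqref{Rhb} of the same $\phi_i^\psi$ — and then extended by the fixed bilinear nodal interpolation into $M_h$, the recovered functions agree at all vertices and hence everywhere; (iv) apply this componentwise and invoke \eqref{Rh-tau} to get $R_h\boldsymbol\sigma=R_h\boldsymbol\sigma^I$.

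There is essentially no obstacle here: the lemma is a direct bookkeeping consequence of the fact that $R_h$ only sees element integrals and that $\boldsymbol\sigma^I$ reproduces those integrals elementwise by \eqref{lem:L2orth}. The only point to state carefully is that the minimization \eqref{eq:stress_mim} (equivalently the normal equations \eqref{eq:linear system}), and with it $\phi_i^\psi$, is well defined — which is guaranteed by $\operatorname{rank}(A)=3$ for sufficiently small $h$ as noted in Remark \ref{5.2} — and that this holds uniformly so the same $A$ (depending only on the mesh, not on $\psi$) is used for both $\boldsymbol\sigma$ and $\boldsymbol\sigma^I$; the difference enters only through $\mathbf b$, which is unchanged.
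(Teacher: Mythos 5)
Your proposal is correct and follows exactly the paper's own argument: reduce to showing the fitting polynomials coincide componentwise, then observe that the least-squares data vector depends only on the element integrals $\int_{K_j}\psi$, which agree for $\boldsymbol\sigma$ and $\boldsymbol\sigma^I$ by \eqref{lem:L2orth}. No issues.
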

\begin{proof} In light of \eqref{Rh}-\eqref{Rhb}, it suffices to show $\phi^{\sigma_{il}}=\phi^{\sigma^I_{il}}$. By the relation  \eqref{lem:L2orth}  it holds $ \int_{K_j}\mathbf\sigma_{il}=\int_{K_j}\mathbf\sigma^I_{il}$ for $j=1,2,\cdots,N$. Then the   conclusion  follows from the minimization problem   \eqref{eq:stress_mim}.
\end{proof}

\begin{theorem}\label{5.74}
 Let $(\mathbf\sigma,\mathbf u)\in H^{2}(\Omega, \mathbb{R}_{sym}^{2\times 2})\bigcap \Sigma \times \mathbf V\bigcap (H^{3}(\Omega))^2$ and $(\mathbf\sigma_{h},\mathbf u_{h})\in \Sigma_h\times \mathbf V_h$ be the solutions of the problems \eqref{eq:model-weak} and \eqref{eq:HybridFEM}, respectively. Then, under  \textbf{Diagonal condition} (MC1) and \textbf{Neighboring condition} (MC2), the following superconvergent result holds:
\begin{align}\label{super-sig}
||\mathbf\sigma-R_{h}\mathbf \sigma_{h}||\lesssim h^{1+\alpha}\|\mathbf\sigma\|_{1}+h^2\|\mathbf\sigma\|_{2}.
\end{align}

\end{theorem}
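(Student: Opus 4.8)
The plan is to decompose the error $\mathbf\sigma-R_h\mathbf\sigma_h$ through the projection $\mathbf\sigma^I$ and exploit the key algebraic identity $R_h\mathbf\sigma=R_h\mathbf\sigma^I$ from Lemma \ref{lem:R_h}. Write
\begin{equation*}
\mathbf\sigma-R_h\mathbf\sigma_h = (\mathbf\sigma-R_h\mathbf\sigma) + (R_h\mathbf\sigma-R_h\mathbf\sigma_h) = (\mathbf\sigma-R_h\mathbf\sigma) + R_h(\mathbf\sigma^I-\mathbf\sigma_h),
\end{equation*}
so that by the triangle inequality $\|\mathbf\sigma-R_h\mathbf\sigma_h\| \leq \|\mathbf\sigma-R_h\mathbf\sigma\| + \|R_h(\mathbf\sigma^I-\mathbf\sigma_h)\|$. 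The first term is handled directly by the approximation estimate \eqref{est-Rh} of Lemma \ref{lemma:boun_R_h} applied componentwise to $\mathbf\sigma\in H^2(\Omega,\mathbb R^{2\times2}_{sym})$, giving $\|\mathbf\sigma-R_h\mathbf\sigma\|\lesssim h^{1+\alpha}\|\mathbf\sigma\|_1+h^2\|\mathbf\sigma\|_2$, which is exactly the right-hand side of \eqref{super-sig}.

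For the second term, I would invoke the $L^2$-boundedness \eqref{boun Rh} of $R_h$ to get $\|R_h(\mathbf\sigma^I-\mathbf\sigma_h)\|\lesssim \|\mathbf\sigma^I-\mathbf\sigma_h\|$, and then apply the superconvergence estimate from Theorem \ref{them:supper}, namely $\|\mathbf\sigma_h-\mathbf\sigma^I\|\lesssim h^{1+\alpha}\|\mathbf\sigma\|_1+h^2|\mathbf\sigma|_2$. Combining the two bounds yields \eqref{super-sig}. The argument is essentially a three-line assembly of already-established facts: Lemma \ref{lem:R_h} kills the cross term involving $R_h$ applied to the (only first-order accurate) difference $\mathbf\sigma-\mathbf\sigma^I$, replacing it by the superconvergent difference $\mathbf\sigma^I-\mathbf\sigma_h$; the boundedness of $R_h$ transfers that superconvergence through the recovery operator; and the Bramble–Hilbert-type estimate \eqref{est-Rh} controls the genuine approximation error of $R_h$ on the smooth exact stress.

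The only point requiring a little care is that \eqref{est-Rh} and the boundedness estimates were stated for scalar functions $\psi$, so I would note that the stress recovery $R_h\mathbf\tau=(R_h\tau_{11},R_h\tau_{22},R_h\tau_{12})^T$ in \eqref{Rh-tau} acts componentwise, whence all scalar estimates carry over to the tensor setting with the Sobolev norms of $\mathbf\sigma$ replacing those of its components, and also that $\mathbf\sigma^I\in\Sigma_h$ is a legitimate input to $R_h$ since $\Sigma_h\subset L^2$. I do not anticipate a genuine obstacle here — the substance of the theorem was already delivered in Theorem \ref{them:supper} and Lemmas \ref{lemma:boun_R_h}–\ref{lem:R_h}; the main thing to get right is simply the bookkeeping of which difference passes through $R_h$ and which does not. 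Uniformity in $\lambda$ is automatic because neither $R_h$ nor the bounds from Theorem \ref{them:supper} involve $\|\mathbb C\|$, in contrast to the naive estimate for $G_h^{\bs\sigma}\bs\sigma_h$ remarked on just before the theorem.
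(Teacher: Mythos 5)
Your proposal is correct and coincides with the paper's own proof: the same decomposition $\mathbf\sigma-R_h\mathbf\sigma_h=(\mathbf\sigma-R_h\mathbf\sigma)+R_h(\mathbf\sigma^I-\mathbf\sigma_h)$ via Lemma \ref{lem:R_h}, followed by the boundedness and approximation estimates of Lemma \ref{lemma:boun_R_h} and the superconvergence of Theorem \ref{them:supper}. The componentwise remark is a harmless elaboration of what the paper leaves implicit.
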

\begin{proof} By Lemma \ref{lem:R_h} it holds
\begin{align*}
\mathbf \sigma-R_{h}\mathbf \sigma_{h}=(\mathbf \sigma-R_{h}\mathbf \sigma)+R_{h}(\mathbf \sigma^{I}-\mathbf \sigma_{h}).
\end{align*}
Then 
the desired superconvergence \eqref{super-sig} follows from Lemma \ref{lemma:boun_R_h} and Theorem
\ref{them:supper}.
\end{proof}

\subsection{A Posteriori Error Estimates}\label{sec5.3}

 Denote $e^{\mathbf u}:=||\nabla\mathbf u-\nabla\mathbf u_{h}||$, $e^{\mathbf \sigma}:=||\mathbf \sigma-\mathbf \sigma_{h}||$.  Recall that $G_{h}\mathbf u_{h}$ and $R_{h}\mathbf \sigma_{h}$ are the recovered displacement gradients and the recovered stresses, respectively. In what follows we shall use the a posteriori estimators
 $$\eta^{\mathbf u}=||G_{h}\mathbf u_{h}-\nabla \mathbf u_{h}||,\quad  \eta^{\mathbf\sigma}=||R_{h}\mathbf \sigma_{h}-\mathbf\sigma_{h}||$$
to  estimate the errors $ e^{\mathbf u}, e^{\mathbf \sigma}$.

\begin{theorem}\label{them:post-u}
Assume that $\mathcal{T}_h$ satisfy \textbf{Diagonal condition} (MC1) and \textbf{Neighboring condition} (MC2).  Let $(\mathbf\sigma,\mathbf u)\in H^{2}(\Omega, \mathbb{R}_{sym}^{2\times 2})\bigcap \Sigma \times \mathbf V\bigcap (H^{3}(\Omega))^2$ and $(\mathbf\sigma_{h},\mathbf u_{h})\in \Sigma_h\times \mathbf V_h$ be the solutions of the problems \eqref{eq:model-weak} and \eqref{eq:HybridFEM}, respectively. Then it holds
\begin{align}
\eta^{\mathbf u}-\|\nabla \mathbf u-G_{h}\mathbf u_{h}\|&\leq e^{\mathbf u} \leq \eta^{\mathbf u}+\|\nabla \mathbf u-G_{h}\mathbf u_{h}\|, \label{a1}\\
\eta^{\mathbf \sigma}-\| \mathbf \sigma-R_{h}\mathbf \sigma_{h}\|&\leq e^{\mathbf \sigma} \leq\eta^{\mathbf \sigma}+ \| \mathbf \sigma-R_{h}\mathbf \sigma_{h}\|.\label{a2}
\end{align}
Moreover, if   the solution  $(\mathbf\sigma_{h},\mathbf u_{h}) $ is such that $ ||\nabla\mathbf u-\nabla\mathbf u_{h}||\gtrsim h$ and $||\mathbf \sigma-\mathbf \sigma_{h}|| \gtrsim h$, then the recovery type a posterior error estimators $ \eta^{\mathbf u}, \eta^{\mathbf \sigma}$   are asymptotically exact in the sense
\begin{align}\label{a3}
 { \eta^{\mathbf u}}/{ e^{\mathbf u}}=1+O(h^{\min\{\alpha,1\}}),\quad
 {\eta^{\mathbf \sigma}}/{ e^{\mathbf \sigma}}=1+O(h^{\min\{\alpha,1\}}).
\end{align}
\begin{proof} The inequalities \eqref{a1}-\eqref{a2} follow from  triangular inequality directly, and  the estimates \eqref{a3} follow from \eqref{a1}-\eqref{a2}, Theorem \ref{them:G_h_estimate} and Theorem \ref{5.74}.

\end{proof}
\end{theorem}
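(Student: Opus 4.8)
The plan is to derive everything from two triangle inequalities plus the two superconvergence theorems already proved. First I would establish \eqref{a1} and \eqref{a2}. For \eqref{a1}, write $\eta^{\mathbf u} = \|G_h\mathbf u_h - \nabla\mathbf u_h\| = \|(G_h\mathbf u_h - \nabla\mathbf u) + (\nabla\mathbf u - \nabla\mathbf u_h)\|$, and apply the triangle inequality in both directions to obtain $|\,\eta^{\mathbf u} - \|\nabla\mathbf u - G_h\mathbf u_h\|\,| \le e^{\mathbf u} \le \eta^{\mathbf u} + \|\nabla\mathbf u - G_h\mathbf u_h\|$; similarly, $e^{\mathbf u} = \|\nabla\mathbf u - \nabla\mathbf u_h\| = \|(\nabla\mathbf u - G_h\mathbf u_h) + (G_h\mathbf u_h - \nabla\mathbf u_h)\|$ gives the reverse bound $\eta^{\mathbf u} - \|\nabla\mathbf u - G_h\mathbf u_h\| \le e^{\mathbf u}$. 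Exactly the same argument with $R_h\mathbf\sigma_h$, $\mathbf\sigma_h$, $\mathbf\sigma$ in place of $G_h\mathbf u_h$, $\nabla\mathbf u_h$, $\nabla\mathbf u$ yields \eqref{a2}. No mesh conditions are needed for this part.

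Next I would prove the asymptotic exactness \eqref{a3}. Dividing \eqref{a1} through by $e^{\mathbf u}$ (which is positive, since $e^{\mathbf u}\gtrsim h>0$ by hypothesis) gives
\[
1 - \frac{\|\nabla\mathbf u - G_h\mathbf u_h\|}{e^{\mathbf u}} \;\le\; \frac{\eta^{\mathbf u}}{e^{\mathbf u}} \;\le\; 1 + \frac{\|\nabla\mathbf u - G_h\mathbf u_h\|}{e^{\mathbf u}},
\]
so it suffices to show $\|\nabla\mathbf u - G_h\mathbf u_h\|/e^{\mathbf u} = O(h^{\min\{\alpha,1\}})$. By Theorem \ref{them:G_h_estimate}, $\|\nabla\mathbf u - G_h\mathbf u_h\| \lesssim h^{1+\alpha}(\|\mathbf u\|_3 + \|\mathbf\sigma\|_1) + h^2\|\mathbf\sigma\|_2 \lesssim h^{1+\min\{\alpha,1\}}(\|\mathbf u\|_3 + \|\mathbf\sigma\|_2)$, while $e^{\mathbf u}\gtrsim h$; hence the ratio is $O(h^{\min\{\alpha,1\}})$, which is exactly $\eta^{\mathbf u}/e^{\mathbf u} = 1 + O(h^{\min\{\alpha,1\}})$. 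The stress estimator is handled identically: divide \eqref{a2} by $e^{\mathbf\sigma}\gtrsim h$, and bound $\|\mathbf\sigma - R_h\mathbf\sigma_h\| \lesssim h^{1+\alpha}\|\mathbf\sigma\|_1 + h^2\|\mathbf\sigma\|_2 \lesssim h^{1+\min\{\alpha,1\}}\|\mathbf\sigma\|_2$ using Theorem \ref{5.74}, so that $\|\mathbf\sigma - R_h\mathbf\sigma_h\|/e^{\mathbf\sigma} = O(h^{\min\{\alpha,1\}})$.

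There is no real obstacle here: the statement is a packaging of the superconvergence estimates into the standard recovery-estimator framework, and the only quantitative input is that the true error decays no faster than the recovery error, i.e. the saturation-type hypotheses $e^{\mathbf u}\gtrsim h$ and $e^{\mathbf\sigma}\gtrsim h$. The mildly delicate point worth a sentence of comment is that these lower bounds are genuinely needed — without them one cannot rule out cancellation making $e^{\mathbf u}$ or $e^{\mathbf\sigma}$ smaller than $h^{1+\min\{\alpha,1\}}$, in which case the ratio need not tend to $1$; one expects the hypotheses to hold generically, since the PS element is only first-order accurate in general, and they are in any case verifiable a posteriori. The rest is the two triangle inequalities above.
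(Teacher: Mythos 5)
Your proposal is correct and follows exactly the paper's argument: the two-sided bounds \eqref{a1}--\eqref{a2} come from the triangle inequality, and dividing by $e^{\mathbf u}\gtrsim h$ (resp.\ $e^{\bs\sigma}\gtrsim h$) together with the superconvergence bounds of Theorems \ref{them:G_h_estimate} and \ref{5.74} yields \eqref{a3}. You have merely written out the steps the paper leaves implicit; nothing further is needed.
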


\section{Numerical Experiments}
In
this section we compute two test problems, Examples \ref{ex:strain_1}-\ref{ex:strain_2}, to verify our  results of superconvergence and a posterior error estimation   for  the PS hybrid stress finite element method.
  The    examples are  both plane strain problems with pure displacement boundary conditions, where  the Lam\'{e} parameters  $\mu,\ \lambda$ are given
by $$\displaystyle \mu=\frac{E}{2(1+\nu)},\quad \lambda=\frac{E\nu}{(1+\nu)(1-2\nu)},$$ with $0<\nu<0.5$ the Poisson ratio and $E$ the Young's modulus. We set $E=1500$.
In all the computation we use $4\times 4$
Gaussian quadrature. Notice that $2\times 2$ Gaussian quadrature is
accurate for computing the stiffness matrix of the PS hybrid stress FEM.  All the fine meshes are obtained by bisection scheme.
We compute  the following relative errors for the displacement and stress approximation:
$$\bar{\theta}^{\mathbf u}: = \frac{|\mathbf u_{h}-\mathbf u^{I}|_{1}}{|\mathbf u|_1}, \quad \bar{e}^{\mathbf u}: = \frac{|\mathbf u_h - \mathbf u|_1}{|\mathbf u|_1}, \quad \bar{\eta}^{\mathbf u}:=\frac{{\eta}^{\mathbf u}}{|\mathbf u|_1}=\frac{||G_{h}\mathbf u_h - \nabla\mathbf u_{h}||}{|\mathbf u|_1},$$
 $$\bar{\theta}^{\mathbf\sigma}:=\frac{\|\mathbf\sigma_{h}-\mathbf\sigma^{I}\|}{\|\mathbf\sigma\|}, \quad \bar{e}^{\mathbf\sigma}: = \frac{\|\mathbf\sigma - \mathbf\sigma_h\|}{\|\mathbf\sigma\|},\quad \bar{\eta}^{\mathbf\sigma}:=\frac{{\eta}^{\mathbf\sigma}}{||\mathbf\sigma||}=\frac{||R_{h}\mathbf \sigma_{h}-\mathbf\sigma_{h}||}{||\mathbf\sigma||}.$$

\begin{example}\label{ex:strain_1} 
The domain $\Omega=[0,1]\times [0,1]$, 
 the body force
$$
\mathbf f = E\pi^2\left( \begin{array}{c} \cos(\pi x)\sin(\pi y) \\ -\sin(\pi x)\cos(\pi y) \end{array} \right),
$$
 and the exact solution $(\mathbf{u,\sigma})$ is given by
$$
\mathbf u = \left(\begin{array}{c} u \\ v \end{array}\right) = \left( \begin{array}{c} (1 + \nu)\cos(\pi x)\sin(\pi y) - 2(1-\nu^2)xy \\ \\
-(1+\nu)\sin(\pi x)\cos(\pi y) + (1-\nu^2)x^2 + \nu(1+\nu)(y^2 - 1) \end{array}\right),
$$
$$
\mathbf\sigma = E\left( \begin{array}{cc} -\pi\sin(\pi x)\sin(\pi y) -2y & 0\\ \\
0 & \pi\sin(\pi x)\sin(\pi y) \end{array}\right).
$$
The initial mesh is shown   in Figure \ref{Fig:Initial mesh}, and  numerical results are listed in Table \ref{table:strain_1_irr_PS}.
\end{example}

\begin{example}\label{ex:strain_2} 
The domain $\Omega=[0,10]\times[-1,1]$,
 the body force
$$
\mathbf f = E\pi^2\left( \begin{array}{c} \cos(\pi x)\sin(\pi y) \\ -\sin(\pi x)\cos(\pi y) \end{array} \right),
$$
and the exact solution is given by
$$
\mathbf u = \left(\begin{array}{c} u \\ v \end{array}\right) = \left( \begin{array}{c} (1 + \nu)\cos(\pi x)\sin(\pi y) - 2(1-\nu^2)xy \\ \\
-(1+\nu)\sin(\pi x)\cos(\pi y) + (1-\nu^2)x^2 + \nu(1+\nu)(y^2 - 1) \end{array}\right),
$$
$$
\mathbf\sigma = E\left( \begin{array}{cc} -\pi\sin(\pi x)\sin(\pi y) -2y & 0\\ \\
0 & \pi\sin(\pi x)\sin(\pi y) \end{array}\right).
$$
The initial mesh is shown   in Figure \ref{fig:beam_Len},
and  numerical results are listed in Table \ref{table:strain_2_irr_PS}.
\end{example}

We note that the refinement by bisection means that \textbf{Diagonal condition} (MC1) is satisfied with $\alpha=1$.  From Tables \ref{table:strain_1_irr_PS}-\ref{table:strain_2_irr_PS} we can draw the following conclusions.

\begin{itemize}

\item  $\bar{\theta}^{\mathbf u}$ and  $\bar{\theta}^{\mathbf \sigma}$ are of  second order convergence, uniformly with respect to $\lambda$. These are   conformable to the uniform superconvergence results in Theorem \ref{them:supper}.

\item   $\bar{e}^{\mathbf u}$  and $\bar{\eta}^{\mathbf u}$, as well as   $\bar{e}^{\mathbf \sigma}$ and $\bar{\eta}^{\mathbf \sigma}$, are of first order convergence, uniformly with respect to $\lambda$. In particular,    $\bar{\eta}^{\mathbf u}$ and $\bar{\eta}^{\mathbf \sigma}$ are asymptotically exact, which means the a posteriori estimators $ {\eta}^{\mathbf u}$ and $ {\eta}^{\mathbf \sigma}$ are asymptotically exact.  All these are conformable to the a posterior estimates in Theorem \ref{them:post-u}.

\end{itemize}

\begin{figure}[htdp]
\setlength{\unitlength}{1cm}
\begin{picture}(14,7)
\put(4,1){\line(1,0){4}}\put(4,1){\line(0,1){4}}\put(4,5){\line(1,0){4}}\put(8,5){\line(0,-1){4}}
\put(4,2.2){\line(5,2){2}}\put(6,3){\line(5,1){2}}
\put(6,3){\line(-2,5){0.8}}\put(6,3){\line(-1,-5){0.4}}

\put(4,0.6){$ (0,0)$}
\put(5.6,0.6){$(0.4,0)$}
\put(8,0.6){$ (1,0)$}
\put(2.8,2.2){$ (0,0.3) $}
\put(6.1,2.7){$ (0.5,0.5) $}
\put(8.1,3.4){$(1,0.6)$}
\put(4,5.1){$ (0,1)$}
\put(5.2,5.1){$ (0.3,1)$}
\put(8,5.1){$ (1,1)$}
\end{picture}
\caption{ $2\times 2$ irregular mesh for Example \ref{ex:strain_1}}\label{Fig:Initial mesh}
\end{figure}

\begin{table}[h]
\caption{The results of PS element on irregular meshes: Example \ref{ex:strain_1}.}\label{table:strain_1_irr_PS}
\begin{center}
\begin{tabular}{|c|c|c|c|c|c|c|c|c|}
\hline
$\nu$  & Error     & $8\times 8$     &  $16\times 16$  &  $32\times 32$   &   $64\times 64$   &  $128\times 128$   &  Order\\
\hline
0.3     & $\bar \theta^{\mathbf u}$                       & 0.0051  &  0.0013  &  0.0003  &  0.0001 &   0.0000 & 1.98\\
        &   $\bar e^{\mathbf u}$                          & 0.1114  &  0.0556  &  0.0278  &  0.0139 &   0.0069 & 1.00\\
        &   $\bar \eta^{\mathbf u}$                       & 0.1216  &  0.0573  &  0.0280  &  0.0139 &   0.0069 & 1.03\\
        \cline{2-8}
        &   $\bar \theta^{\mathbf\sigma}$                 & 0.0138  &  0.0034  &  0.0009  &  0.0002 &   0.0001 & 2.00\\
        &   $\bar e^{\mathbf\sigma}$                      & 0.0953  &  0.0475  &  0.0237  &  0.0119 &   0.0059 & 1.00\\
        &   $\bar \eta^{\mathbf\sigma}$                   & 0.1059  &  0.0491  &  0.0240  &  0.0119 &   0.0059 & 1.04\\
\hline
0.49    & $\bar \theta^{\mathbf u}$                       & 0.0054  &  0.0014  &  0.0004  &  0.0001 &   0.0000 & 1.98\\
        &   $\bar e^{\mathbf u}$                          & 0.1143  &  0.0569  &  0.0284  &  0.0142 &   0.0071 & 1.00 \\
        &   $\bar \eta^{\mathbf u}$                       & 0.1240  &  0.0586  &  0.0287  &  0.0142 &   0.0071 & 1.03\\
        \cline{2-8}
        &   $\bar \theta^{\mathbf\sigma}$                 & 0.0153  &  0.0038  &  0.0010  &  0.0002 &   0.0001 & 2.00\\
        &   $\bar e^{\mathbf\sigma}$                      & 0.1182  &  0.0593  &  0.0297  &  0.0148 &   0.0074 & 1.00\\
        &   $\bar \eta^{\mathbf\sigma}$                   & 0.1294  &  0.0609  &  0.0299  &  0.0149 &   0.0074 & 1.03\\
\hline
0.4999  &  $\bar \theta^{\mathbf u}$                      & 0.0057  &  0.0014  &  0.0004  &  0.0001 &   0.0000 & 1.99\\
        &   $\bar e^{\mathbf u}$                          & 0.1144  &  0.0570  &  0.0285  &  0.0142 &   0.0071 & 1.00\\
        &   $\bar \eta^{\mathbf u}$                       & 0.1241  &  0.0587  &  0.0287  &  0.0143 &   0.0071 & 1.03\\
        \cline{2-8}
        &   $\bar \theta^{\mathbf\sigma}$                 & 0.0155  &  0.0039  &  0.0010  &  0.0002 &   0.0001 & 2.00\\
        &   $\bar e^{\mathbf\sigma}$                      & 0.1203  &  0.0604  &  0.0302  &  0.0151 &   0.0076 & 1.00\\
        &   $\bar \eta^{\mathbf\sigma}$                   & 0.1315  &  0.0620  &  0.0304  &  0.0151 &   0.0076 & 1.03\\
\hline
\end{tabular}
\end{center}
\end{table}

\begin{figure}[h]
\setlength{\unitlength}{0.8cm}
\begin{picture}(14,4)

\put(2,1){\line(1,0){10}} \put(2,1){\line(0,1){2}} \put(2,3){\line(1,0){10}} \put(12,1){\line(0,1){2}}

\put(1.7,1){\line(0,1){2}} \put(1.3,1){\line(1,0){0.4}} \put(1.3,3){\line(1,0){0.4}} \put(1.4,1.9){$2$}

\put(2,0.7){\line(1,0){10}} \put(2,0.3){\line(0,1){0.4}}  \put(12,0.3){\line(0,1){0.4}} \put(3,0.3){\line(0,1){0.4}}
\put(4,0.3){\line(0,1){0.4}} \put(6,0.3){\line(0,1){0.4}} \put(9,0.3){\line(0,1){0.4}}
\put(2.4, 0.3){$1$} \put(3.4, 0.3){$1$} \put(4.9, 0.3){$2$} \put(7.4, 0.3){$3$} \put(10.4, 0.3){$3$}

\put(2,3.3){\line(1,0){10}} \put(2,3.3){\line(0,1){0.4}} \put(12,3.3){\line(0,1){0.4}}
\put(4,3.3){\line(0,1){0.4}} \put(6,3.3){\line(0,1){0.4}} \put(7,3.3){\line(0,1){0.4}} \put(8,3.3){\line(0,1){0.4}}
\put(2.9,3.5){$2$} \put(4.9,3.5){$2$} \put(6.4,3.5){$1$} \put(7.4,3.5){$1$} \put(9.9,3.5){$4$}

\put(1.9,1.9){$\bullet$} \put(2.1,1.9){$(0,0)$}

\put(3,1){\line(1,2){1}} \put(4,1){\line(1,1){2}} \put(6,1){\line(1,2){1}} \put(9,1){\line(-1,2){1}}

\end{picture}
\caption{  $5\times 1$ irregular mesh for Example \ref{ex:strain_2}.}\label{fig:beam_Len}
\end{figure}

\begin{figure}[htdp]
\setlength{\unitlength}{1cm}
\begin{picture}(12,2.0)



\put(0.5,1){\line(1,0){5}}\put(0.5,1){\line(0,1){1}}\put(0.5,2){\line(1,0){5}}\put(5.5,1){\line(0,1){1}}

\put(1,1){\line(1,2){0.5}}\put(1.5,1){\line(1,1){1}}\put(2.5,1){\line(1,2){0.5}}\put(4,1){\line(-1,2){0.5}}

\put(3,0.5){\footnotesize $5\times1$}

%
%

\put(6.5,1){\line(1,0){5}}\put(6.5,1){\line(0,1){1}}\put(6.5,2){\line(1,0){5}}\put(11.5,1){\line(0,1){1}}

\put(7,1){\line(1,2){0.5}}\put(7.5,1){\line(1,1){1}}\put(8.5,1){\line(1,2){0.5}}\put(10,1){\line(-1,2){0.5}}
\put(6.5,1.5){\line(1,0){5}}\put(6.75,1){\line(1,4){0.25}}\put(7.25,1){\line(3,4){0.75}}\put(8,1){\line(3,4){0.75}}
\put(9.25,1){\line(0,1){1}}\put(10.75,1){\line(-1,4){0.25}}

\put(9,0.5){\footnotesize $10\times2$}
\end{picture}
\caption{Irregular meshes}\label{Fig:reandirre}
\end{figure}
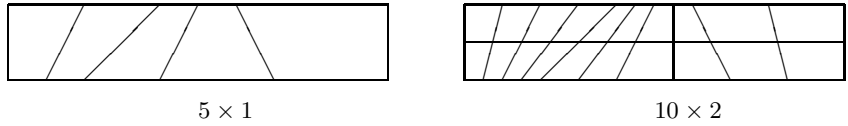

\begin{table}[h]
\caption{The results of PS element on irregular meshes: Example \ref{ex:strain_2}.}\label{table:strain_2_irr_PS}
\begin{center}
\begin{tabular}{|c|c|c|c|c|c|c|c|c|}
\hline
$\nu$  & Error        &  $20\times 4$  &  $40\times 8$   &   $80\times 16$   &  $160\times 32$    & $320\times 64$ & Order \\
\hline
0.3     &  $\bar\theta^{\mathbf u}$                       & 0.0478  &  0.0132  &  0.0037  &  0.0009  &  0.0002  & 1.92\\
        &   $\bar e^{\mathbf u}$                          & 0.1712  &  0.0889  &  0.0448  &  0.0224  &  0.0112  & 0.98\\
        &   $\bar \eta^{\mathbf u}$                       & 0.1760  &  0.1267  &  0.0607  &  0.0287  &  0.0140  & 0.91\\
        \cline{2-8}
        &   $\bar \theta^{\mathbf\sigma}$                 & 0.2848  &  0.0692  &  0.0168  &  0.0042  &  0.0010  & 2.03\\
        &   $\bar e^{\mathbf\sigma}$                      & 0.5451  &  0.2741  &  0.1362  &  0.0680  &  0.0340  & 1.00\\
        &   $\bar \eta^{\mathbf\sigma}$                   & 0.4972  &  0.3447  &  0.1548  &  0.0710  &  0.0345  & 0.96\\
\hline
0.49    &  $\bar \theta^{\mathbf u}$                      &  0.1203 &   0.0268 &   0.0065 &   0.0016 &   0.0004  & 2.05\\
        &   $\bar e^{\mathbf u}$                          &  0.2501 &   0.1204 &   0.0596 &   0.0297 &   0.0149  & 1.02 \\
        &   $\bar \eta^{\mathbf u}$                       &  0.2006 &   0.1551 &   0.0755 &   0.0355 &   0.0173  & 0.89\\
        \cline{2-8}
        &   $\bar \theta^{\mathbf\sigma}$                 & 0.4431  &  0.1060  &  0.0254  &  0.0063  &  0.0016  & 2.04 \\
        &   $\bar e^{\mathbf\sigma}$                      & 0.6416  &  0.3286  &  0.1635  &  0.0816  &  0.0408  & 1.00\\
        &   $\bar \eta^{\mathbf\sigma}$                   & 0.5682  &  0.3974  &  0.1816  &  0.0845  &  0.0412  & 0.95\\
\hline
0.4999    & $\bar \theta^{\mathbf u}$                     & 0.1450  &  0.0289  &  0.0070  &  0.0017  &  0.0004  & 2.09\\
        &   $\bar e^{\mathbf u}$                          & 0.2653  &  0.1229  &  0.0607  &  0.0303  &  0.0151  & 1.03\\
        &   $\bar \eta^{\mathbf u}$                       & 0.2030  &  0.1571  &  0.0765  &  0.0359  &  0.0175  & 0.89\\
        \cline{2-8}
        &   $\bar \theta^{\mathbf\sigma}$                 & 0.4981  &  0.1101  &  0.0263  &  0.0065  &  0.0016  & 2.07\\
        &   $\bar e^{\mathbf\sigma}$                      & 0.6702  &  0.3341  &  0.1661  &  0.0829  &  0.0414  & 1.00\\
        &   $\bar \eta^{\mathbf\sigma}$                   & 0.5732  &  0.4024  &  0.1841  &  0.0858  &  0.0419  & 0.95\\
\hline
\end{tabular}
\end{center}
\end{table}

\end{document}